\numberwithin{equation}{section}
\theoremstyle{plain}
\newtheorem{theorem}{Theorem}[section]
\newtheorem{lemma}[theorem]{Lemma}
\numberwithin{equation}{section}
\def\a{\alpha}
\def\b{\beta}
\def\L{\Lambda}
\def\W{\Omega}
\def\ni{\noindent}
\def\rank{{\rm rank\/}}
\def\ind{{\rm ind\/}}
\def\Stab{{\rm Stab\/}}
\def\End{\mathrm{End}}
\def\R{\mathbb{R} }
\def\fg{\mathfrak{g}}
\begin{document}
	
	\title{Transversality of the perturbed reduced  Vafa-Witten moduli spaces on 4-manifolds}

	
	\author{Ren Guan}
	\address{School of Mathematics and Statistics, 
		Jiangsu Normal University, Xuzhou 221100, China}
	\email{guanren@jsnu.edu.cn}

	\begin{abstract}
		Previously we finish the establishment of the transversality of the general part of the Vafa-Witten moduli spaces,	in this paper, we deal with the rest, i.e., the reduced part. We consider Vafa-Witten equation on closed, oriented and smooth Riemann 4-manifolds with $C\equiv0$,  and construct  perturbation to establish the transversality of the perturbed equation. Then we show that for a generic choice of the perturbation terms, the moduli space of solutions to the perturbed reduced Vafa-Witten equation for the structure group $SU(2)$ or $SO(3)$ on a closed 4-manifold is a smooth manifold of dimension zero.

	\end{abstract}
	
	\subjclass{58D27, 53C07, 81T13}
	
	\keywords{reduced Vafa-Witten equations, Vafa-Witten moduli spaces,  transversality, 4-manifolds}
	
	\maketitle
	
	\tableofcontents
	
	\section{Introduction}
	
	Vafa-Witten equations are first proposed by Vafa and Witten  for studying an $N=4$ topologically twisted supersymmetric Yang-Mills theory on 4-manifolds \cite{VW}. They require a triplet $(A,B,C)$ to satisfy
	\begin{equation}\left\{
		\begin{aligned}
			&d_A^*B +d_A C=0,\\&F_A^++\frac{1}{8}[B\centerdot B]+\frac{1}{2}[B,C]=0,  \\
		\end{aligned}\right. \label{aa}
	\end{equation}
	where $A$ is a connection on a principal $G$-bundle $P$ over a smooth 4-manifold $X$, $B$ is a self-dual 2-form with values in the adjoint bundle $\fg_P$, and $C$ is a section of the adjoint bundle $\fg_P$. 
	
	Previously, in \cite{G} we define the \emph{Vafa-Witten moduli space} $\mathcal{M}_{VW}$ as the set of solutions to the equation \eqref{aa} modulo the gauge transformation, and then we divide it into three parts:
	$$\mathcal{M}_{VW}=\mathcal{M}_{\text{asd}}\cup\mathcal{M}_{RVW}\cup\widetilde{\mathcal{M}}_{VW}$$
	where
	\begin{align*}
		\mathcal{M}_{\text{asd}}&:=\{[A,B,C]\in\mathcal{M}_{VW}:\text{$B\equiv 0$ and $C\equiv0$ on $X$}\},\\
		\mathcal{M}_{RVW}&:=\{[A,B,C]\in\mathcal{M}_{VW}:\text{$B\not\equiv 0$ and $C\equiv0$ on $X$}\},\\
		\widetilde{\mathcal{M}}_{VW}&:=\{[A,B,C]\in\mathcal{M}_{VW}:\text{$C\not\equiv0$ on $X$}\}.
	\end{align*}
	$\mathcal{M}_{\text{asd}}$ is the well-known moduli space of the anti-self-dual (ASD) connections, $\mathcal{M}_{RVW}$ is called the \emph{reduced} part of $\mathcal{M}_{VW}$ and $\widetilde{\mathcal{M}}_{VW}$ the \emph{general} part.

	In  \cite{Tan}, by applying the method of \cite{Fe}, Tanaka constructs suitable perturbations of Vafa-Witten equations on closed symplectic 4-manifolds, then he shows that if the structure group is $SU(2)$ or $SO(3)$, the moduli spaces are smooth manifolds of dimension zero for generic choices of the perturbation parameters. On closed 4-manifolds, we studied the transversality of the full rank part of the Vafa-Witten moduli spaces \cite{DG}, and we also establish the transversality of the general part $\widetilde{\mathcal{M}}_{VW}$ \cite{G}. In this paper, by applying similar methods used in \cite{G}, we deal with the rest, the reduced part $\mathcal{M}_{RVW}$. We construct a perturbation of \eqref{aa} and establish the perturbed  equation on closed, oriented and smooth Riemannian 4-manifolds at the solutions $(A,B,C)$ where $A$ is irreducible, $B\not\equiv 0$ and $C\equiv 0$, then we show that the dimensions of this part of the moduli spaces are zero for a generic choice of the perturbation parameters. Our main result is the following theorem.
	
	\begin{theorem}\label{main}Let $(X,g)$ be a closed, oriented and smooth Riemannian 4-manifold, $r>k\geq3$ are two integers. Then there is a first-category subset $\mathcal{T}^r_{fc}\subset\mathcal{T}^r:=C^r(GL(\Lambda^{1}))\times C^r(GL(\Lambda^{2,+}))\times C^r(GL(\Lambda^{2,+}))\times C^r(X,\Lambda^{1})\times C^r(X,\Lambda^{1})$ such that for all $(\tau^1,\tau^2,\tau^3,\theta_1,\theta_2)$ in $\mathcal{T}^r-\mathcal{T}^r_{fc}$, the moduli space of the $L_k^2$ solutions $[A,B,C]$  to the equation
		\begin{equation}\left\{
			\begin{aligned}
				&d_A^*B+d_AC+\tau^1\big(B\centerdot\theta_1+[B\centerdot B]\centerdot\theta_2\big)=0\\
				&F_A^++\frac{1}{8}\tau^2[B\centerdot B]+\frac{1}{2}[B,C]+\tau^3B=0\\
			\end{aligned}\right.\end{equation}
		with $A$ irreducible, $B\not\equiv0$ and $C\equiv0$ is a smooth manifold of dimension zero.
	\end{theorem}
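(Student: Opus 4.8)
The plan is to run the universal-moduli-space argument of Freed--Uhlenbeck \cite{Fe}, in the form adapted to the Vafa--Witten equations in \cite{Tan} and \cite{G}, now exploiting that $A$ is irreducible and $B\not\equiv 0$ in place of the hypothesis $C\not\equiv 0$ used for the general part $\widetilde{\mathcal{M}}_{VW}$ in \cite{G}. Write $\vec\tau=(\tau^1,\tau^2,\tau^3,\theta_1,\theta_2)$ for a point of $\mathcal T^r$. Fix Sobolev completions: let $\mathcal C^{k}$ be the space of $L^2_k$ triples $(A,B,C)$ with $A$ irreducible and $B\not\equiv 0$, on which the $L^2_{k+1}$ gauge group acts freely (by irreducibility) with Coulomb slices $\{(a,b,c):d_A^{*}a=0\}$, so the quotient $\mathcal B^{k,*}$ is a smooth Banach manifold. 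The left-hand sides of the two perturbed equations assemble into a smooth section
\[
\mathfrak{vw}\colon\ \mathcal B^{k,*}\times\mathcal T^r\ \longrightarrow\ \mathscr E,\qquad
\mathscr E_{([A,B,C],\vec\tau)}\cong L^2_{k-1}(\Lambda^1\otimes\fg_P)\oplus L^2_{k-1}(\Lambda^{2,+}\otimes\fg_P),
\]
and $\mathfrak M:=\mathfrak{vw}^{-1}(0)$ is the parametrized moduli space. It then suffices to establish, at every solution with $C\equiv 0$ (solutions with $C\not\equiv 0$ lie in $\widetilde{\mathcal M}_{VW}$, treated with a different perturbation in \cite{G}): (i) surjectivity of $D\mathfrak{vw}$, so that $\mathfrak M$ is near such solutions a smooth Banach manifold; and (ii) Fredholmness, of index $0$, of the fibrewise linearization rolled up with the Coulomb condition. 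Granting (i) and (ii), the projection $\pi\colon\mathfrak M\to\mathcal T^r$ is a $C^{r-k}$ Fredholm map of index $0$; the Sard--Smale theorem produces a first-category set $\mathcal T^r_{fc}$ of its critical values, and for $\vec\tau\notin\mathcal T^r_{fc}$ the solutions with $C\equiv 0$ form a smooth $0$-manifold. Indeed, at such a solution $h^0=0$ (irreducibility) and $h^2=0$ (by (i)), while by (ii) the Euler characteristic of the deformation complex is $0$, so $h^1=0$: the point is rigid, and the $C\equiv 0$ locus near it is just that point.

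For (ii), the relevant deformation complex at a solution with $C\equiv 0$ is
\[
0\to\Omega^0(\fg_P)\xrightarrow{\ d_A\ }\Omega^1(\fg_P)\oplus\Omega^{2,+}(\fg_P)\oplus\Omega^0(\fg_P)\xrightarrow{\ \mathcal L\ }\Omega^1(\fg_P)\oplus\Omega^{2,+}(\fg_P)\to 0 ,
\]
with $\mathcal L$ the linearization of the two equations in $(a,b,c)$. Modulo zeroth-order terms the principal part of the rolled-up operator splits as the anti-self-dual operator $d_A^{+}\oplus d_A^{*}\colon\Omega^1\to\Omega^{2,+}\oplus\Omega^0$ together with its formal adjoint $\Omega^{2,+}\oplus\Omega^0\to\Omega^1$, whose indices are opposite; hence the total index is $0$ --- equivalently, the Vafa--Witten moduli space has virtual dimension $0$ on any closed oriented $4$-manifold. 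Ellipticity of $\mathcal L$ with the Coulomb term, and hence the Fredholm property and the independence of the index on the perturbation, is a routine symbol check.

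The substance is (i). As the operator is elliptic, $\mathrm{im}\,D\mathfrak{vw}$ is closed with finite-dimensional $L^2$-orthogonal complement, so it is enough to show that a pair $(\phi,\psi)\in L^2(\Lambda^1\otimes\fg_P)\oplus L^2(\Lambda^{2,+}\otimes\fg_P)$ annihilating $\mathrm{im}\,D\mathfrak{vw}$ vanishes. Pairing $(\phi,\psi)$ against the field-and-gauge variations $(a,b,c)$ in the Coulomb slice forces $\mathcal L^{*}(\phi,\psi)=(d_A\xi,0,0)$ for some $\xi\in\Omega^0(\fg_P)$; this is a first-order elliptic system, so $(\phi,\psi,\xi)$ is smooth and has the Aronszajn unique-continuation property. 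Pairing against variations of the perturbation parameters produces pointwise constraints on $\{B\neq 0\}$: varying $\tau^3$ forces $\psi(x)\perp\Lambda^{2,+}_x\otimes W_x$, where $W_x\subseteq\fg_{P,x}$ is spanned by the $\fg_P$-components of $B(x)$; at points where $\dim W_x=2$ the quadratic term $[B\centerdot B](x)$ is a nonzero element of $\Lambda^{2,+}_x\otimes W_x^{\perp}$ (the cross product on $\Lambda^{2,+}$ paired with the bracket on $\mathfrak{su}(2)$), so varying $\tau^2$ further forces $\psi(x)\perp\Lambda^{2,+}_x\otimes W_x^{\perp}$; thus $\psi(x)=0$ wherever $\dim W_x\ge 2$, and varying $\theta_1,\tau^1,\theta_2$ yields the analogous constraints on $\phi$. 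Since $B$ satisfies a second-order elliptic equation (from the field equations modulo gauge) and $B\not\equiv 0$, the locus $\{B\neq 0\}$ is open and dense; if $\dim W_x\ge 2$ somewhere, $\psi$ vanishes on a nonempty open set, hence everywhere by unique continuation, after which the remaining components of $\phi$ are killed by feeding the pointwise constraints back into $\mathcal L^{*}(\phi,\psi)=(d_A\xi,0,0)$ and using irreducibility of $A$ once more. The remaining (degenerate) possibility, that $B=\omega\otimes\xi$ is decomposable at every point of $\{B\neq 0\}$ so that the second equation reads $F_A^{+}=-(\tau^3\omega)\otimes\xi$, is handled using the Bianchi identity, irreducibility of $A$, and unique continuation; in either case $(\phi,\psi)\equiv 0$ and $D\mathfrak{vw}$ is surjective.

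I expect this last propagation to be the main obstacle. The perturbation parameters act only on form indices, hence ``see'' only the subspace of $\fg_P$ into which $B$ already points; removing the residual cokernel directions --- especially on the locus where $B$ has rank one in $\fg_P$ --- forces one to use the non-vanishing of $B$, unique continuation for the adjoint elliptic system, the freeness coming from irreducibility of $A$, and the structure of $\mathfrak{su}(2)$ simultaneously. This is the reduced-part analogue of the key vanishing lemma of \cite{G}, and it is exactly why the strata $B\equiv 0$ (the anti-self-dual moduli space, generically not $0$-dimensional) and structure groups of rank $>1$ are excluded.
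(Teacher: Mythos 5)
Your overall framework (parametrized moduli space, index-zero Fredholm fibres, Sard--Smale, killing a would-be cokernel element $(\phi,\psi)$ by varying the perturbation parameters and then invoking Aronszajn unique continuation) is the same as the paper's, and your treatment of $\psi$ on the locus where $B$ has fibrewise rank $\geq 2$ matches the paper's use of Lemma~\ref{xx} together with Feehan's pointwise lemma. The genuine gap is the degenerate case you defer to one sentence: the possibility that $B$ has rank one at every point of $\{B\neq 0\}$, i.e.\ $[B\centerdot B]\equiv 0$. Saying this is "handled using the Bianchi identity, irreducibility of $A$, and unique continuation" is not an argument, and the suggested mechanism is not the one that works. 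The paper's Lemma~\ref{xzz} is exactly this step and carries most of the technical weight: writing $B=\xi\otimes\omega$, the first perturbed equation gives $d_A\xi=0$ only on the open dense set $X_B=\{B\neq 0\}$, and reducibility of $A$ on a dense open set does \emph{not} by itself contradict irreducibility on $X$, because the parallel section $\xi$ need not extend across $X\setminus X_B$. One must propagate the parallel section: the paper does this by putting $A$ in radial gauge, converting the perturbed second equation $F_A^++\tau^3B=0$ into the ODE $d\mathfrak{A}/dr=*_{g_r}F_{\mathfrak{A}(r)}$ on geodesic spheres, and applying the Agmon--Nirenberg continuation theorem (Theorem~\ref{agth}) to conclude $d_A\hat\xi=0$ on larger balls, hence on all of $X$ --- the analogue of Donaldson--Kronheimer's extension-of-reducibility argument. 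Aronszajn-type unique continuation for the adjoint elliptic system satisfied by $(\phi,\psi)$, which is what you invoke, does not address this; without Lemma~\ref{xzz} your case analysis does not close and surjectivity of $D\mathfrak{vw}$ is not established.

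A second, related gap is the mechanism for killing $\phi$. Varying $\theta_1,\theta_2$ (and pairing against $\phi$) only constrains $\phi$ against the pointwise span of $B\centerdot\Lambda^1+[B\centerdot B]\centerdot\Lambda^1$, which is at most $8$-dimensional inside the $12$-dimensional fibre $\fg_P\otimes\Lambda^1$, so "analogous constraints on $\phi$" do not force $\phi=0$ pointwise. The paper instead varies $\delta\tau^1\in C^r(\mathfrak{gl}(\Lambda^1))$ against the \emph{fixed} section $B\centerdot\theta_1+[B\centerdot B]\centerdot\theta_2$ and needs this section to have rank $3$ on an open set (Lemma~\ref{xz}); that rank-$3$ property requires both $[B\centerdot B]\not\equiv 0$ (again Lemma~\ref{xzz}) and the genericity condition that $\theta_1\wedge\theta_2$ not vanish on any open set. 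The latter forces the exclusion of a first-category set $\mathcal{T}^r_1$ of parameters \emph{before} Sard--Smale is applied, whereas in your proposal $\mathcal{T}^r_{fc}$ consists only of Sard--Smale critical values; as written, the parametrized transversality you assert would have to hold for every parameter, which the above dimension count shows is not proved.
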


	Note that the Lie algebra of $SU(2)$ and $SO(3)$ are isomorphic: $\mathfrak{su}(2)\cong\mathfrak{so}(3)$, so in the following, we use $\mathfrak{su}(2)$ instead of $\mathfrak{so}(3)$.
	
	\section{Vafa-Witten equations and their perturbations}
	
	In this section, we state the general set-up of Vafa-Witten equations and construct a perturbation for getting the transversality result. 
	
	\subsection{The Vafa-Witten map}
	
	Let $(X,g)$ be a closed, oriented and smooth Riemannian 4-manifold, $P\to X$ a principle $SU(2)$- or $SO(3)$-bundle on $X$ and $\mathcal{A}_k (P)$ the space of $L_k^2$ connections where $k\geq 3$ is an integer. $\mathcal{A}_k (P)$ is an affine space, so for any fixed $L_k^2$ connection $A_0$, we have
	$$\mathcal{A}_k (P)=A_0+L_k^2(X,\mathfrak{su}(2)_P\otimes\Lambda^1).$$
	The \emph{configuration spaces} and \emph{target spaces} are defined by
	$$\begin{aligned}
		\mathcal{C}_k(P)&:=\mathcal{A}_k (P)\times L_k^2(X,\mathfrak{su}(2)_P\otimes\Lambda^{2,+})\oplus L_k^2(X, \mathfrak{su}(2)_P),\\
		\mathcal{C}_{k-1}'(P)&:=L_{k-1}^2(X,\mathfrak{su}(2)_P\otimes\Lambda^1)\oplus L_{k-1}^2(X,\mathfrak{su}(2)_P\otimes\Lambda^{2,+}).
	\end{aligned}$$
	It's easy to see that $\mathcal{C}_k(P)$ is also an affine vector space, which means for any fixed $L_k^2$ triple $(A_0,B_0,C_0)\in\mathcal{C}_k(P)$, we have
	$$\mathcal{C}_k(P)=(A_0,B_0,C_0)+L_k^2(X,\mathfrak{su}(2)_P\otimes\Lambda^1)\oplus L_k^2(X,\mathfrak{su}(2)_P\otimes\Lambda^{2,+})\oplus
	L_k^2(X,\mathfrak{su}(2)_P).$$
	
	The \emph{Vafa-Witten map} is defined by
	\begin{equation}\begin{aligned}
			\mathcal{VW}&:\mathcal{C}_k(P)\to\mathcal{C}_{k-1}'(P),\\
			\mathcal{VW}(A,B,C)&:=\begin{pmatrix}
				d_A^*B+d_AC\\
				F_A^++\frac{1}{8}[B\centerdot B]+\frac{1}{2}[B,C]\\
			\end{pmatrix}.
		\end{aligned} \label{cc}
	\end{equation}
	The action of an $L_{k+1}^2$ gauge transformation $\zeta\in\mathcal{G}_{k+1}(P)$ on any $(A,B,C)\in\mathcal{C}_k(P)$ is given by
	$$\zeta\cdot(A,B,C):=(A-(d_A\zeta^{-1})\zeta,\zeta^{-1}B\zeta,\zeta^{-1}C\zeta),$$
	and a connection $A$ is called \emph{irreducible} if $\Stab(A):=\{\zeta\in\mathcal{G}_{k+1}(P):\zeta\cdot A=A\}=Z(G)$, the center of $G$. 
	
	It's not hard to see that
	$$\mathcal{VW}(\zeta\cdot(A,B,C))=\begin{pmatrix}
		\zeta^{-1}(d_A^*B+d_AC)\zeta\\
		\zeta^{-1}(F_A^++\frac{1}{8}[B\centerdot B]+\frac{1}{2}[B,C])\zeta\\
	\end{pmatrix},$$
	so the map $\mathcal{VW}$ is gauge-equivariant (cf.  \cite[\S 3.2.1]{Ma}). Define the quotient space $\mathcal{B}_k (P):=\mathcal{C}_k (P)/\mathcal{G}_{k+1}(P)$ (cf. \cite[Propostion 2.8]{FL},  \cite[Theorem 3.2.3]{Ma}).
	
	Denote by $\mathcal{C}_k^\diamond (P) \subset\mathcal{C}_k(P)$ the triples $(A,B,C)$ with $A$ irreducible, $B\not\equiv 0$ and $C\equiv 0$. The slice theorem (cf.  \cite[Propostion 2.8]{FL} \cite[Theorem 3.2.3]{Ma}) implies that the quotient space $\mathcal{B}_k^\diamond (P):=\mathcal{C}_k^\diamond (P)/\mathcal{G}_{k+1}(P)\subset\mathcal{B}_k(P)$ is an open and smooth Hilbert submanifold of $\mathcal{B}_k(P)$. 
	
	\subsection{The perturbed reduced Vafa-Witten map}
	
	We now construct the \emph{perturbed reduced Vafa-Witten map} as follows:
	\begin{equation}\begin{aligned}
			\mathcal{RVW}&:\mathcal{T}^r\times\mathcal{C}_k(P)\to\mathcal{C}_{k-1}'(P),\\
			\mathcal{RVW}(\tau^1,\tau^2,\tau^3,\theta_1,\theta_2,A,B,C)&:=\begin{pmatrix}
				d_A^*B+d_AC+\tau^1(B\centerdot\theta_1+[B\centerdot B]\centerdot\theta_2)\\
				F_A^++\frac{1}{8}\tau^2[B\centerdot B]+\frac{1}{2}[B,C]+\tau^3B
			\end{pmatrix}, \label{cd}
		\end{aligned}
	\end{equation}
	where $\mathcal{T}^r:=C^r(GL(\Lambda^{1}))\times C^r(GL(\Lambda^{2,+}))\times C^r(GL(\Lambda^{2,+}))\times C^r(X,\Lambda^{1})\times C^r(X,\Lambda^{1})$ denotes the Banach manifold of $C^r$ perturbation parameters $(\tau^1,\tau^2,\tau^3,\theta_1,\theta_2)$ (with $r$ large enough, say $r>k$). The gauge group $\mathcal{G}_{k+1}(P)$ acts trivially on the space of perturbations $\mathcal{T}^r$, so the map $\mathcal{RVW}$ is also gauge-equivariant, and $\mathcal{PM}_k(P):=\mathcal{RVW}^{-1}(0)/\mathcal{G}_{k+1}(P)\subset\mathcal{T}^r\times\mathcal{B}_k(P)$ is the parametrized moduli space of the perturbed reduced Vafa-Witten equation. Denote $\mathcal{PM}_k^\diamond(P):=\mathcal{PM}_k(P)\cap(\mathcal{T}^r\times\mathcal{B}_k^\diamond(P))$, then the gauge-equivariant map $\mathcal{RVW}$ defines a section of a Banach vector bundle $\overline{\mathcal{E}}_k$ over $\mathcal{T}^r\times\mathcal{B}_k^\diamond(P)$ with total space $\overline{\mathcal{E}}_k:=(\mathcal{T}^r\times\mathcal{C}_k^\diamond(P))\times_{\mathcal{G}_{k+1}(P)}\mathcal{C}'_{k-1}(P)$. In particular, the parametrized moduli space $\mathcal{PM}_k^\diamond(P)$ is the zero set of the section $\mathcal{RVW}(\cdot,\cdot)$ of the vector bundle $\overline{\mathcal{E}}_k$ over $\mathcal{T}^r\times\mathcal{B}_k^\diamond(P)$. 
	
	\subsection{The Kuranishi complex}

	For each chosen perturbation parameters $(\tau^1,\tau^2,\tau^3,\theta_1,\theta_2)$, the associated Kuranishi complex of $\mathcal{RVW}$ is
	$$\begin{aligned}
		0\to L^2_{k+1}(X,\Lambda^0\otimes\mathfrak{su}(2)_P)&\xrightarrow{d^0_{(A,B,C)}}L^2_k (X,\mathfrak{su}(2)_P\otimes\Lambda^1)\oplus L^2_k(X,\mathfrak{su}(2)_P\otimes\Lambda^{2,+})\oplus\\
		L^2_k(X,\Lambda^0\otimes\mathfrak{su}(2)_P)&\xrightarrow{d^1_{(A,B,C)}} L^2_{k-1}(X,\mathfrak{su}(2)_P\otimes\Lambda^1)\oplus L^2_{k-1}(X,\mathfrak{su}(2)_P\otimes\Lambda^{2,+})\to 0,
	\end{aligned}$$
	where
	\begin{equation}
		d^0_{(A,B,C)}(\xi)=(d_A\xi, [B, \xi],[C, \xi]) 
		\label{cf} \end{equation}
	is the linearization of the action of gauge group,
	\begin{equation} \begin{aligned}
			&d^1_{(A,B,C)}(a,b,c)=(D\mathcal{RVW})_{(A,B,C)}(a,b,c) \\
			&=\begin{pmatrix}
				d^*_Ab+d_Ac-[B\centerdot a]-[C,a]+\tau^1(b\centerdot\theta_1+2[b\centerdot B]\centerdot\theta_2)\\
				d_A^+a+\frac{1}{4}\tau^2[b\centerdot B] +\frac12 [b,C] +\frac12 [B,c]+\tau^3 b
			\end{pmatrix}
		\end{aligned} \label{cg}
	\end{equation}
	is the linearization of the perturbed reduced Vafa-Witten maps. Also we have (cf. \cite{Fe,G, Ma})
	$$\begin{aligned}
		d^1_{(A,B,C)}\circ d^0_{(A,B,C)}(\xi)=\begin{pmatrix}
			[\xi,d_A^*B+d_AC+\tau^1(B\centerdot\theta_1+[B\centerdot B]\centerdot\theta_2)]\\
			[\xi,F_A^++\frac{1}{8}\tau^2[B\centerdot B]+\frac12 [B,C]+\tau^3B]\end{pmatrix}.
	\end{aligned}$$
	Therefore $d^1_{(A,B,C)}\circ d^0_{(A,B,C)}=0$ if and only if $\mathcal{RVW}(\tau^1,\tau^2,\tau^3,\theta_1,\theta_2,A,B,C)=0$, i.e. the sequence is a complex if and only if $[A,B,C]\in\mathcal{M}_{RVW}(\tau^1,\tau^2,\tau^3,\theta_1,\theta_2)$, where $\mathcal{M}_{RVW}(\tau^1,\tau^2,\tau^3,\theta_1,\theta_2):=\{[A,B,C]\in\mathcal{B}_k(P):\mathcal{RVW}(\tau^1,\tau^2,\tau^3,\theta_1,\theta_2,A,B,C)=0\}$.
	
	The $L^2$ adjoint of $d^0_{(A,B,C)}$ is (cf. \cite{G, Ma})
	$$d^{0,*}_{(A,B,C)}(a,b,c)=d_A^*a+[b\cdot B]+[c,C],$$
	and the combined opertor
	$$\begin{aligned}
		\mathcal{D}_{(A,B,C)}&=d^{1}_{(A,B,C)}+ d^{0,*}_{(A,B,C)}:\begin{matrix} L^2_k(X,\mathfrak{su}(2)_P\otimes\Lambda^1) \\ \oplus \\ L^2_k(X,\mathfrak{su}(2)_P\otimes\Lambda^{2,+}) \\ \oplus \\ L^2_k (X, \mathfrak{su}(2)_P)\end{matrix}\to\begin{matrix} L^2_{k-1}(X,\mathfrak{su}(2)_P\otimes\Lambda^1) \\ \oplus \\ L^2_{k-1}(X,\mathfrak{su}(2)_P\otimes\Lambda^{2,+}) \\ \oplus \\ L^2_{k-1}(X,\mathfrak{su}(2)_P)\end{matrix}
	\end{aligned}$$
	differs from the following operator 
	\begin{equation} \mathcal{D}_A =\left(
		\begin{array}{ccc}
			0 & d_A^* & d_A \\
			d_A^+ & 0 & 0 \\
			d_A^* & 0 & 0 \\
		\end{array}
		\right):\begin{pmatrix} a \\ b \\ c \end{pmatrix} \mapsto
		\begin{pmatrix} d_A^* b +d_A c \\ d_A^+ a \\ d_A^* a \end{pmatrix}
		\label{cj} \end{equation}
	by zeroth-order terms. By the Sobolev multiplication theorem and the Rellich embedding theorem, there is a contiuous Sobolev multiplication map $L_k^2\times L_k^2\to L_k^2$ and the inclusion $L_k^2\subset L_{k-1}^2$ is compact when $k\geq 3$, we have $\ind(\mathcal{D}_{(A,B,C)})=\ind(\mathcal{D}_A)=0$, the second equality is due to the self-adjointness of the elliptic operato $\mathcal{D}_A$. 
	
	It follows that the above complex is an elliptic deformation complex for the perturbed reduced Vafa-Witten equation with cohomology groups
	$$H_{(A,B,C)}^0:=\mathrm{Ker}d^{0}_{(A,B,C)},H_{(A,B,C)}^1:=\mathrm{Ker}d^{1}_{(A,B,C)}/\mathrm{Im}~d^{0}_{(A,B,C)},H_{(A,B,C)}^2:=\mathrm{Coker}d^{1}_{(A,B,C)}.$$
	Similarly, $H_{(A,B,C)}^0$ is the Lie algebra of the stabilizer of the triple $(A,B,C)$,  and $H_{(A,B,C)}^0=0$ if the stabilizer of $(A,B,C)$ is $Z(G)$, which means $A$ is irreducible, $H_{(A,B,C)}^1$ is the tangent space $T_{[A,B,C]}\mathcal{M}_{RVW}(\tau^1,\tau^2,\tau^3,\theta_1,\theta_2)$ and if  $\mathrm{Coker}(D\mathcal{RVW})_{(A,B,C)}=0$, then $H_{(A,B,C)}^2=0$ and $[A,B,C]$ is a regular point of $\mathcal{M}_{RVW}(\tau^1,\tau^2,\tau^3,\theta_1,\theta_2)$. We will prove later that every point of the reduced part of the moduli space $\mathcal{M}_{RVW}^*(\tau^1,\tau^2,\tau^3,\theta_1,\theta_2):=\mathcal{M}_{RVW}(\tau^1,\tau^2,\tau^3,\theta_1,\theta_2)\cap\mathcal{B}_k^\diamond(P)$ is regular and $\mathcal{M}_{RVW}^*(\tau^1,\tau^2,\tau^3,\theta_1,\theta_2)$ is a smooth manifold of dimension $\ind(\mathcal{D}_A)=0$.
	
	\section{Quadratic expansion of the perturbed reduced  Vafa-Witten map}
	
	We now find the quadratic expansion of $\mathcal{RVW}(\tau^1,\tau^2,\tau^3,\theta,\gamma,A,B,C)$ with a fixed perturbation parameter to build the regularity result (see  \cite[\S3]{FL} for the perturbed $PU(2)$-monopole equations and  \cite[\S3.2.2]{Ma} for Vafa-Witten equations). 
	Let $(A_0,B_0,C_0)\in\mathcal{C}_k(P)$ be a fixed smooth triple and $(a,b,c)\in L_k^2(X,\mathfrak{su}(2)_P\otimes\Lambda^1)\oplus L_k^2(X,\mathfrak{su}(2)_P\otimes\Lambda^{2,+})\oplus L_k^2(X,\mathfrak{su}(2)_P)$, then
	\begin{align*}
		&\mathcal{RVW}(\tau^1,\tau^2,\tau^3,\theta_1,\theta_2,A_0+a,B_0+b,C_0+c)\\
		=&\begin{pmatrix}d_{A_0+a}^*(B_0+b)+d_{A_0+a}(C_0+c)+\tau^1((B_0+b)\centerdot\theta_1+[B_0+b, B_0+b]\centerdot\theta_2)\\
			F_{A_0+a}^++\frac{1}{8}\tau^2[B_0+b\centerdot B_0+b]+\frac{1}{2}[B_0+b,C_0+c]+\tau^3(B_0+b)\end{pmatrix}\\
		=&\mathcal{RVW}(\tau^1,\tau^2,\tau^3,\theta_1,\theta_2,A_0,B_0,C_0)+d_{(A_0,B_0,C_0)}^1(a,b,c)\\
		&+\begin{pmatrix}-[a\centerdot b]+[a,c]+\tau^1([b\centerdot b]\centerdot\theta_2)\\
			\frac{1}{2}[a\wedge a]^++\frac{1}{8}\tau^2[b\centerdot b]+\frac{1}{2}[b,c]\end{pmatrix} \\
		=&\mathcal{RVW}(\tau^1,\tau^2,\tau^3,\theta_1,\theta_2,A_0,B_0,C_0)+d_{(A_0,B_0,C_0)}^1(a,b,c)+\{(a,b,c),(a,b,c)\},
	\end{align*}
	where
	$$\begin{aligned}
		\{(a,b,c),(a,b,c)\}:=\begin{pmatrix}-[a\centerdot b]+[a,c]+\tau^1([b\centerdot b]\centerdot\theta_2)\\
			\frac{1}{2}[a\wedge a]^++\frac{1}{8}\tau^2[b\centerdot b]+\frac{1}{2}[b,c]\end{pmatrix}.
	\end{aligned}$$
	Given $(A_0,B_0,C_0)\in\mathcal{C}_k(P)$ and $(u_0,v_0)\in\mathcal{C}_{k-1}'(P)$, consider the inhomogeneous equation
	\begin{equation}\label{pvw}
		\mathcal{RVW}(\tau^1,\tau^2,\tau^3,\theta_1,\theta_2,A_0+a,B_0+b,C_0+c)=(u_0,v_0)
	\end{equation}
	for the triplets $(a,b,c)$. To make the equation elliptic, we impose the gauge-fixing condition
	\begin{equation}\label{gf}
		d^{0,*}_{(A,B,C)}(a,b,c)=w,
	\end{equation}
	then combine \eqref{pvw} and \eqref{gf}, we get an elliptic equation:
	\begin{equation} \mathcal{D}_{(A_0,B_0,C_0)}(a,b,c)+\{(a,b,c),(a,b,c)\}=(w,u,v)
		\label{ccc}
	\end{equation}
	where $(u,v)=(u_0,v_0)-\mathcal{RVW}(\tau^1,\tau^2,\tau^3,\theta_1,\theta_2,A_0,B_0,C_0)$. Equation \eqref{ccc} is called \emph{the perturbed   Vafa-Witten equation under Coulomb gauge}.

	Like the unperturbed case, we have the following regular result of the solutions of perturbed reduced Vafa-Witten equations.
	
	\begin{theorem} \label{tj} {\bf (Global estimate for $L_1^2$ solutions to the inhomogeneous perturbed reduced Vafa-Witten plus Coulomb slice equations, cf.  \cite[Corollary 3.4]{FL}, \cite[Theorem  3.1]{G}, \cite[Theorem 3.3.1]{Ma})\/}
		Let $(X,g)$ be a closed, oriented and smooth Riemannian 4-manifold, $P\to X$ a principle $SU(2)$- or $SO(3)$-bundle on $X$ and let $(A_0,B_0,C_0)$ be a $C^{\infty}$ configuration in $\mathcal{C}(P)$. Then there is a positive constant $\epsilon=\epsilon(A_0,B_0,C_0)$ such that if $(a,b,c)$ is an $L_1^2$ solution to the  equation \eqref{ccc}, where $(w,u,v)$ is in $L_k^2$ and $||(a,b,c)||_{L^4(X)}<\epsilon$, and $k\geq 3$ is an integer, then $(a,b,c)\in L_{k+1}^2$ and there is a polynomial $Q_k(x,y)$, with positive real coefficients, depending at most on $(A_0,B_0,C_0),k$ such that $Q_k(0,0)=0$ and
		$$||(a,b,c)||_{L_{k+1,A_0}^2(X)}\leq Q_k\Big(||(w,u,v)||_{L_{k,A_0}^2(X)},||(a,b,c)||_{L^2(X)}\Big).$$
		In particular, if $(w,u,v)$ is in $C^r$ then $(a,b,c)$ is in $C^{r+1}$. 
	\end{theorem}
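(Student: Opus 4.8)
The plan is to follow the by-now standard elliptic bootstrapping scheme for gauge-theoretic equations, as in the references \cite{FL}, \cite{G}, \cite{Ma}, adapted to the perturbed reduced Vafa-Witten operator $\mathcal{D}_{(A_0,B_0,C_0)}$. The key point is that the leading-order part of the left-hand side of \eqref{ccc} is the first-order elliptic operator $\mathcal{D}_{A_0}$ of \eqref{cj}, which is self-adjoint and hence has closed range and finite-dimensional kernel and cokernel; the operator $\mathcal{D}_{(A_0,B_0,C_0)}$ differs from $\mathcal{D}_{A_0}$ only by zeroth-order terms built out of the fixed smooth data $(A_0,B_0,C_0)$ and the fixed smooth perturbation parameters $(\tau^1,\tau^2,\tau^3,\theta_1,\theta_2)$, so it too is elliptic of index $0$, and the elliptic estimate
\begin{equation*}
\|(a,b,c)\|_{L^2_{j+1,A_0}(X)}\le c_j\Big(\|\mathcal{D}_{(A_0,B_0,C_0)}(a,b,c)\|_{L^2_{j,A_0}(X)}+\|(a,b,c)\|_{L^2(X)}\Big)
\end{equation*}
holds for each $j\ge 0$ with $c_j$ depending only on $(A_0,B_0,C_0)$, the perturbation parameters, and $j$.

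The first step is the base case, improving an $L^2_1$ solution to $L^2_2$. Rearranging \eqref{ccc} gives $\mathcal{D}_{(A_0,B_0,C_0)}(a,b,c)=(w,u,v)-\{(a,b,c),(a,b,c)\}$; the quadratic term $\{\cdot,\cdot\}$ is a sum of pointwise bilinear expressions (of the schematic shapes $[a\centerdot b]$, $[a,c]$, $\tau^1([b\centerdot b]\centerdot\theta_2)$, $[a\wedge a]^+$, $\tau^2[b\centerdot b]$, $[b,c]$), so by Hölder and the Sobolev embedding $L^2_1\hookrightarrow L^4$ in dimension four it lies in $L^2$ with $\|\{(a,b,c),(a,b,c)\}\|_{L^2(X)}\lesssim\|(a,b,c)\|_{L^4(X)}\|(a,b,c)\|_{L^2_1(X)}$. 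Combined with the hypothesis $\|(a,b,c)\|_{L^4(X)}<\epsilon$ and the elliptic estimate for $j=1$, this yields $(a,b,c)\in L^2_2$ together with a bound of the form $\|(a,b,c)\|_{L^2_{2,A_0}}\le c_1\big(\|(w,u,v)\|_{L^2_{1,A_0}}+\epsilon\,\|(a,b,c)\|_{L^2_1}+\|(a,b,c)\|_{L^2}\big)$; choosing $\epsilon$ small enough to absorb the $\epsilon\,\|(a,b,c)\|_{L^2_1}$ term (after interpolating $L^2_1$ between $L^2$ and $L^2_2$) produces the desired polynomial-in-two-variables control $Q_1\big(\|(w,u,v)\|_{L^2_{1,A_0}},\|(a,b,c)\|_{L^2}\big)$ with $Q_1(0,0)=0$.

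The second step is the induction $L^2_j\Rightarrow L^2_{j+1}$ for $2\le j\le k$. Once $(a,b,c)\in L^2_j$ with $j\ge 2$, the Sobolev multiplication $L^2_j\times L^2_j\to L^2_j$ (valid since $j\ge 2$ in dimension four) shows $\{(a,b,c),(a,b,c)\}\in L^2_j$ with $L^2_j$-norm bounded by a polynomial in $\|(a,b,c)\|_{L^2_j}$ (no small constant is needed now since we are past the borderline exponent); since $(w,u,v)\in L^2_k\subset L^2_j$, the right-hand side of \eqref{ccc} is in $L^2_j$, and the elliptic estimate for this $j$ gives $(a,b,c)\in L^2_{j+1}$ with the norm bounded by $Q_j$ of $\|(w,u,v)\|_{L^2_{j,A_0}}\le\|(w,u,v)\|_{L^2_{k,A_0}}$ and of $\|(a,b,c)\|_{L^2_{j,A_0}}$, which by the inductive hypothesis is itself bounded by a polynomial in $\|(w,u,v)\|_{L^2_{k,A_0}}$ and $\|(a,b,c)\|_{L^2}$. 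Composing these polynomials through $j=2,\dots,k$ yields the final polynomial $Q_k$ with $Q_k(0,0)=0$ and the stated estimate for $\|(a,b,c)\|_{L^2_{k+1,A_0}(X)}$.

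Finally, for the $C^r\Rightarrow C^{r+1}$ statement: having reached $L^2_{k+1}$ with $k\ge 3$, the pair $(a,b,c)$ is now $C^1$; if in addition $(w,u,v)\in C^r$, then since $\{(a,b,c),(a,b,c)\}$ is a smooth bilinear (hence quadratic) pointwise expression in $(a,b,c)$ its $C^m$-regularity is at least that of $(a,b,c)$, so $\mathcal{D}_{(A_0,B_0,C_0)}(a,b,c)$ lies in $C^{\min(r,m)}$ whenever $(a,b,c)\in C^{m+?}$, and Schauder theory for the elliptic operator $\mathcal{D}_{(A_0,B_0,C_0)}$ (whose coefficients are $C^\infty$, being built from $(A_0,B_0,C_0)$ and the fixed perturbation data) upgrades the regularity one derivative at a time until $(a,b,c)\in C^{r+1}$. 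The main obstacle — really the only place where care is needed — is the borderline base step in dimension four, where the $L^2_1\hookrightarrow L^4$ multiplication is not bounded into $L^2_1$ and one must genuinely use the smallness hypothesis $\|(a,b,c)\|_{L^4}<\epsilon$ and an interpolation absorption argument; every later step is a routine consequence of the Sobolev multiplication theorem in the non-critical range together with the self-adjoint ellipticity of $\mathcal{D}_{A_0}$.
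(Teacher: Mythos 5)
There is a genuine gap at the crucial base step, and it is precisely the step you yourself flag as ``the only place where care is needed.'' To conclude $(a,b,c)\in L^2_2$ from the $j=1$ elliptic estimate $\|(a,b,c)\|_{L^2_{2,A_0}}\le c_1\big(\|\mathcal{D}_{(A_0,B_0,C_0)}(a,b,c)\|_{L^2_{1,A_0}}+\|(a,b,c)\|_{L^2}\big)$ you must already know that the right-hand side of \eqref{ccc}, and in particular the quadratic term $\{(a,b,c),(a,b,c)\}$, lies in $L^2_1$; but from $(a,b,c)\in L^2_1$ you only control it in $L^2$ (via $L^2_1\hookrightarrow L^4$ and $L^4\cdot L^4\to L^2$), since $\nabla\{(a,b,c),(a,b,c)\}$ is schematically $\nabla(a,b,c)\cdot(a,b,c)\in L^2\cdot L^4\subset L^{4/3}$, not $L^2$. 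Consequently the inequality you write, $\|(a,b,c)\|_{L^2_2}\le c_1\big(\|(w,u,v)\|_{L^2_1}+\epsilon\|(a,b,c)\|_{L^2_1}+\|(a,b,c)\|_{L^2}\big)$, is not obtained by the route you describe, and the subsequent ``interpolation absorption'' only addresses how to absorb a small term in an a priori estimate one would prove assuming $(a,b,c)\in L^2_2$ already; it does not produce the regularity gain $L^2_1\Rightarrow L^2_2$. In the references the paper defers to (the theorem is stated here with no proof, only ``cf.\ \cite[Corollary 3.4]{FL}, \cite[Theorem 3.1]{G}, \cite[Theorem 3.3.1]{Ma}''), this critical step is handled differently: the $L^4$-smallness is used to treat $\{(a,b,c),\cdot\}$ as a small perturbation of the linear elliptic operator on $L^p_1$-type spaces, first improving the \emph{integrability} of the solution (to $L^p_1$ with $p>4$, hence to $C^0\cap L^\infty$), after which $\{(a,b,c),(a,b,c)\}\in L^2_1$ and the derivative-gaining bootstrap you describe can start. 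Your proposal is missing this intermediate integrability-improvement argument, which is the actual content of the base case.

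A secondary error: your induction invokes the multiplication $L^2_j\times L^2_j\to L^2_j$ ``valid since $j\ge 2$ in dimension four.'' This fails at $j=2$ (one needs $j>n/2=2$, i.e.\ $j\ge 3$, as the paper itself notes when computing the index); so the step $L^2_2\Rightarrow L^2_3$ also cannot be run purely on the $L^2_j$ scale without first having the $L^\infty$ (or $L^p_1$, $p>4$) control mentioned above. Once that control is in hand, both $j=2$ and all higher $j$ proceed as you say, and the final $C^r\Rightarrow C^{r+1}$ statement via Schauder/elliptic regularity with smooth coefficients is fine. So the overall architecture matches the standard proofs the paper cites, but as written the two borderline steps ($L^2_1\to L^2_2$ and $j=2$) are not justified.
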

	
	\begin{theorem} \label{tk}
		{\bf  (Global regularity of $L_k^2$ solutions to the perturbed reduced Vafa-Witten equations for $k\geq 3$, cf.  \cite[Proposition 3.7]{FL}, \cite[Theorem  3.2]{G}, \cite[Theorem 3.3.2]{Ma})} Let $(X,g)$ be a closed, oriented and smooth Riemannian 4-manifold and $P\to X$ a principle $SU(2)$- or $SO(3)$-bundle on $X$. Let $k\geq 3$ be an integer and suppose that $(A,B,C)$ is an $L_k^2$ solution to $\mathcal{RVW}(\tau^1,\tau^2,\tau^3,\theta_1,\theta_2,A,B,C)=0$ for fixed $C^r$ perturbation parameters $(\tau^1,\tau^2,\tau^3,\theta_1,\theta_2)$, then there is a gauge transformation $\zeta\in L_{k+1}^2(\mathcal{G}_P)$ such that $\zeta\cdot (A,B,C)$ is $C^\infty$ over $X$.
	\end{theorem}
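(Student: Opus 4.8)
The plan is to run the standard elliptic bootstrapping argument (cf.\ \cite[Proposition 3.7]{FL}, \cite[Theorem 3.3.2]{Ma}, \cite[Theorem 3.2]{G}): place $(A,B,C)$ in a Coulomb gauge relative to a nearby \emph{smooth} reference configuration, recognize the resulting system as the elliptic equation \eqref{ccc}, and then feed it into the a priori estimate of Theorem \ref{tj}. First I would choose a smooth connection $A_0$ with $\|A-A_0\|_{L_k^2}$ arbitrarily small ($C^\infty$ configurations being $L_k^2$-dense), and invoke the slice theorem / Uhlenbeck-type local gauge fixing recalled in \S2 to produce, for $A$ close enough to $A_0$, a gauge transformation $\zeta\in L_{k+1}^2(\mathcal{G}_P)$ with $d_{A_0}^{*}(\zeta\cdot A-A_0)=0$ and $\|\zeta\cdot A-A_0\|_{L_k^2}$ small. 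Then I would pick smooth sections $B_0,C_0$ approximating $\zeta\cdot B,\zeta\cdot C$ in $L_k^2$ and set $(a,b,c):=(\zeta\cdot A-A_0,\ \zeta\cdot B-B_0,\ \zeta\cdot C-C_0)\in L_k^2$, so that $\|(a,b,c)\|_{L^4(X)}<\epsilon$ for the constant $\epsilon=\epsilon(A_0,B_0,C_0)$ of Theorem \ref{tj}.

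Since $\mathcal{RVW}$ is gauge-equivariant, $\zeta\cdot(A,B,C)=(A_0+a,B_0+b,C_0+c)$ still solves the perturbed reduced equation; combining this with the Coulomb condition and the quadratic expansion of \S3 shows that $(a,b,c)$ satisfies \eqref{ccc},
\[\mathcal{D}_{(A_0,B_0,C_0)}(a,b,c)+\{(a,b,c),(a,b,c)\}=(w,u,v),\]
with $w=d^{0,*}_{(A_0,B_0,C_0)}(a,b,c)=[b\cdot B_0]+[c,C_0]\in L_k^2$ (a smooth section times an $L_k^2$ section) and $(u,v)=-\mathcal{RVW}(\tau^1,\tau^2,\tau^3,\theta_1,\theta_2,A_0,B_0,C_0)\in C^r$ (a $C^r$-combination of smooth data). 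Theorem \ref{tj} then gives $(a,b,c)\in L_{k+1}^2$, hence $\zeta\cdot(A,B,C)\in L_{k+1}^2$; now $w\in L_{k+1}^2$ and $(u,v)\in C^r\subset L_{k+1}^2$ (using $r>k$), so Theorem \ref{tj} reapplies with $k$ replaced by $k+1$, and iterating one gains a derivative at each stage, up to the level of regularity permitted by the $C^r$ perturbation parameters — genuinely $C^\infty$ when $\tau^i,\theta_i$ are smooth, and in any case as smooth as needed since $r$ may be taken arbitrarily large. Alternatively, arranging the \emph{full} Coulomb slice condition $w=d^{0,*}_{(A_0,B_0,C_0)}(a,b,c)=0$ in the first step makes $(w,u,v)\in C^r$, and the last clause of Theorem \ref{tj} then yields $\zeta\cdot(A,B,C)\in C^{r+1}$ in one step.

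The one genuinely delicate ingredient is the gauge-fixing step: constructing the $L_{k+1}^2$ gauge transformation that renders the system elliptic while leaving the merely $C^r$ perturbation untouched, which rests on the Uhlenbeck/slice gauge fixing for a connection of only $L_k^2$ regularity. The remainder is the routine verification that the nonlinearities $\{(a,b,c),(a,b,c)\}$ together with the perturbation terms $\tau^1([b\cdot b]\cdot\theta_2)$, $\frac{1}{8}\tau^2[b\cdot b]$, $\frac{1}{2}[b,c]$ and $\tau^3 b$ respect the Sobolev multiplications $L_m^2\times L_m^2\to L_m^2$ and $C^r\times L_m^2\to L_{\min(m,r)}^2$ for $m\ge k\ge 3$ — precisely the content of the hypotheses $r>k\ge 3$ — so that Theorem \ref{tj} can be iterated term by term.
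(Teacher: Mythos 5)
Your proposal is correct and follows essentially the same route the paper intends: the paper gives no independent argument for Theorem~\ref{tk}, deferring to \cite[Proposition 3.7]{FL}, \cite[Theorem 3.3.2]{Ma}, \cite[Theorem 3.2]{G}, and the standard proof there is precisely your scheme of choosing a nearby smooth reference configuration, passing to Coulomb gauge via the slice theorem, recognizing the gauge-fixed system \eqref{ccc}, and bootstrapping with the a priori estimate of Theorem~\ref{tj}. Your caveat that fixed $C^r$ parameters only yield $C^{r+1}$ regularity (so the literal $C^\infty$ conclusion requires smooth parameters or taking $r$ as large as needed) reflects a looseness in the statement itself rather than a defect in your argument, and matches the convention of the cited sources.
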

	
	\section{Transversality of the reduced part of the moduli spaces}
	
	In this section we verify the perturbed reduced Vafa-Witten map \eqref{cd}, viewed as a section of the Banach vector bundle $\overline{\mathcal{E}}_k$ over $\mathcal{T}^r\times\mathcal{B}_k^\diamond(P)$, is transverse to the zero section of $\overline{\mathcal{E}}_k$ over $\mathcal{T}^r\times\mathcal{B}_k^\diamond(P)$. 
	
	\subsection{The linearization of the perturbed reduced Vafa-Witten map}
	
	Fix a perturbation parameter $(\tau^1,\tau^2,\tau^3,\theta_1,\theta_2)\in\mathcal{T}^r$, if $\Theta':=(\tau^1,\tau^2,\tau^3,\theta_1,\theta_2,A,B,C) \in \mathcal{T}^r\times\mathcal{B}_k(P)$ satisfies $\mathcal{RVW}(\Gamma)=0$, then the linearization of the map $\mathcal{RVW}$ at $\Gamma$ is 
	\begin{align*}
		&(D\mathcal{RVW})_{\Theta'}(\delta\tau^1,\delta\tau^2,\delta\tau^3,\delta\theta_1,\delta\theta_2,a,b,c)\\
		=&\begin{pmatrix}(D\mathcal{RVW}_1)_{\Theta'}(\delta\tau^1,\delta\tau^2,\delta\tau^3,\delta\theta_1,\delta\theta_2,a,b,c)\\
			(D\mathcal{RVW}_2)_{\Theta'}(\delta\tau^1,\delta\tau^2,\delta\tau^3,\delta\theta_1,\delta\theta_2,a,b,c)\end{pmatrix}\\
		=&\begin{pmatrix}
			d^*_Ab +d_A c-[B\centerdot a] -[C,a]+\delta\tau^1(B\centerdot\theta_1+[B\centerdot B]\centerdot\theta_2)+\tau^1(b\centerdot\theta_1+2[b\centerdot B]\centerdot\theta_2)\\
			+\tau^1(B\centerdot\delta\theta_1+[B\centerdot B]\centerdot\delta\theta_2)\\
			d_A^+a+\frac{1}{8}\delta\tau^1[B\centerdot B]+\frac{1}{4}\tau^1[b\centerdot B] +\frac12 [B,c] +\frac12 [b, C]+\delta\tau^2B+\tau^2b
		\end{pmatrix},
	\end{align*}
	where $(\delta\tau^1,\delta\tau^2,\delta\tau^3,\delta\theta_1,\delta\theta_2,a,b,c)\in\mathcal{T}^r\times T_{[A,B,C]}\mathcal{M}_{RVW}^*(\tau^1,\tau^2,\tau^3,\theta_1,\theta_2)$ (It's easy to see that $T_{(\tau^1,\tau^2,\tau^3,\theta_1,\theta_2)}\mathcal{T}^r=\mathcal{T}^r$).   For $\Theta:=(\tau^1,\tau^2,\tau^3,\theta_1,\theta_2,A,B,0) \in \mathcal{T}^r\times\mathcal{B}_k^\diamond(P)$, we have
	\begin{align*}
		&(D\mathcal{RVW})_{\Theta}(\delta\tau^1,\delta\tau^2,\delta\tau^3,\delta\theta_1,\delta\theta_2,a,b,c)\\
		=&\begin{pmatrix}
			d^*_Ab +d_A c-[B\centerdot a]+\delta\tau^1(B\centerdot\theta_1+[B\centerdot B]\centerdot\theta_2)+\tau^1(b\centerdot\theta_1+2[b\centerdot B]\centerdot\theta_2)\\
			+\tau^1(B\centerdot\delta\theta_1+[B\centerdot B]\centerdot\delta\theta_2)\\
			d_A^+a+\frac{1}{8}\delta\tau^1[B\centerdot B]+\frac{1}{4}\tau^1[b\centerdot B] +\frac12 [B,c] +\delta\tau^2B+\tau^2b
		\end{pmatrix}.
	\end{align*}
	Note that the full differential $(D\mathcal{RVW})_{\Theta}$ differs from the parameter fixed differential $d^1_{A,B,C}$ in \eqref{cg} by bounded linear terms in $\delta\tau^1,\delta\tau^2,\delta\tau^3,\delta\theta_1,\delta\theta_2$. This fact, together with the estimate in Theorem~\ref{tj}, implies that $(D\mathcal{RVW})_{\Theta}$ has closed range. Hence
	$$\mathrm{Ran}(D\mathcal{RVW})_{\Theta} \not= \mathcal{C}'_{k-1}(P)$$
	if and only if there is a nonzero pair $(\phi,\psi)\in\mathcal{C}'_{k-1}(P)=L_{k-1}^2(X,\mathfrak{su}(2)_P\otimes\Lambda^1)\oplus L_{k-1}^2(X,\mathfrak{su}(2)_P\otimes\Lambda^{2,+})$ such that $\forall(\delta\tau^1,\delta\tau^2,\delta\tau^3,\delta\theta_1,\delta\theta_2,a,b,c)\in\mathcal{T}^r\times T_{[A,B,0]}\mathcal{M}_{RVW}^*(\tau^1,$ $\tau^2,\tau^3,\theta_1,\theta_2)$ we have
	\begin{equation}\label{yy}\begin{aligned}
			&\langle(D\mathcal{RVW})_{\Theta}(\delta\tau^1,\delta\tau^2,\delta\tau^3,\delta\theta_1,\delta\theta_2,a,b,c),(\phi,\psi)\rangle_{L^2(X)}\\
			=&\langle(D\mathcal{RVW}_1)_{\Theta}(\delta\tau^1,\delta\tau^2,\delta\tau^3,\delta\theta_1,\delta\theta_2,a,b,c),\phi\rangle_{L^2(X)}\\
			+&\langle(D\mathcal{RVW}_2)_{\Theta}(\delta\tau^1,\delta\tau^2,\delta\tau^3,\delta\theta_1,\delta\theta_2,a,b,c),\psi\rangle_{L^2(X)}=0.
	\end{aligned}\end{equation}
	The above formula implies $(\phi,\psi)\in\mathrm{Ker}(D\mathcal{RVW})_{\Theta}^*$ \big(where $(D\mathcal{RVW})_{\Theta}^*$ is the $L^2(X)$ adjoint operator of $(D\mathcal{RVW})_{\Theta}$\big), then applying elliptic regularity for the Laplacian $(D\mathcal{RVW})_{\Theta}(D\mathcal{RVW})_{\Theta}^*$ with $C^{r-1}$ coefficients implies that $(\phi,\psi)$ is $C^{r+1}$(cf.  \cite[\S5]{FL}). And Aronszajn's theorem (cf.  \cite[Remark 3]{Ar}, \cite[Theorem 1.8]{Kaz}) implies that $(\phi,\psi)$ in $\mathrm{Ker}\big((D\mathcal{RVW})_{\Theta}(D\mathcal{RVW})_{\Theta}^*\big)$ has the unique continuation property (cf.  \cite[Lemma 5.9]{FL}). Therefore, to prove that $(\phi,\psi)\equiv 0$ on $X$, it is only necessary to prove that $(\phi,\psi)$ is zero on an open subset of $X$.
	
	\subsection{Transversality of the reduced part}
	
	Before establishing the transversality, we prove the following two lemmas first: The first lemma establishes the unique continuation property of the connection $A$ when $[A,B,0]$ is a zero point of the perturbed reduced Vafa-Witten map \eqref{cd}, the second lemma provides a full rank part of the first component of the map \eqref{cd}, which is crucial to the establishment of transversality. 
	
	As in \cite{G}, we need the following  Agmon-Nirenberg unique continuation theorem to prove Lemma \ref{xzz}.
	\begin{theorem}\label{agth}
		Let $(\mathfrak{H},(\cdot,\cdot))$ be a Hilbert space and let $\mathcal{P}:\mathrm{Dom}(\mathcal{P}(r))=\mathfrak{H}_D\subset\mathfrak{H}\to\mathfrak{H}$ be a family of symmetric linear operators for $r\in[r_0,R)$. Suppose that $\eta\in C^1([r_0,R),\mathfrak{H})$ with $\eta(r)\in\mathfrak{H}_D$ and $\mathcal{P}\eta\in C^0([r_0,R),\mathfrak{H})$ such that
		$$\left|\left|\frac{d\eta}{dr}-\mathcal{P}(r)\eta(r)\right|\right|\leq c_1||\eta(r)||$$
		for some positive constants $c_1$ and all $r\in[r_0,R)$. If the function $r\mapsto(\eta(r),\mathcal{P}(r)\eta(r))$ is differentiable for $r\in[r_0,R)$ and satisfies
		$$\left|\left|\frac{d\mathcal{P}}{dr}\eta\right|\right|\leq c_2(||\mathcal{P}\eta||+||\eta||)$$
		for positive constants $c_2$ and every $r\in[r_0,R)$, then the following holds: If $\eta(r)=0$ for $r_0\leq r\leq r_1<R$, then $\eta(r)=0$ for all $r\in[r_0,R)$.
	\end{theorem}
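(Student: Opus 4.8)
The plan is to run the classical Agmon--Nirenberg log-convexity (``frequency function'') argument: the whole statement reduces to one differential inequality for the Rayleigh quotient of $\mathcal{P}(r)$ along $\eta(r)$, after which the conclusion follows from a Gr\"onwall estimate. First I would reduce the statement to excluding a ``last vanishing time''. Suppose $\eta\equiv 0$ on $[r_0,r_1]$ but $\eta\not\equiv 0$ on $[r_0,R)$. Since $\eta$ is continuous, $\{r:\eta(r)\neq 0\}$ is open; choose $\bar r$ with $\eta(\bar r)\neq 0$ and let $(\rho_*,\beta)$ be the connected component of this set containing $\bar r$. Because $\eta\equiv 0$ on $[r_0,r_1]$, that component is disjoint from $[r_0,r_1]$, so $\rho_*\geq r_1>r_0$, and maximality forces $\eta(\rho_*)=0$; thus $\|\eta(s)\|\to 0$ as $s\downarrow\rho_*$ while $\|\eta\|>0$ on $(\rho_*,\bar r\,]$. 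The goal is to show this situation is impossible.

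On $(\rho_*,\bar r\,]$ set $\mu(r):=(\mathcal{P}(r)\eta(r),\eta(r))/\|\eta(r)\|^{2}$, which is differentiable there by the regularity hypotheses ($\eta\in C^1$, $\mathcal{P}\eta\in C^0$, $r\mapsto(\eta,\mathcal{P}\eta)$ differentiable). Writing $\rho:=\eta'-\mathcal{P}\eta$ (so $\|\rho\|\leq c_1\|\eta\|$) and using that $\mathcal{P}(r)$ is symmetric, a direct computation gives
\[
\tfrac{d}{dr}\log\|\eta(r)\|^{2}=2\mu(r)+e(r),\qquad |e(r)|\leq 2c_1,
\]
and, after differentiating $(\mathcal{P}\eta,\eta)$ once, substituting $\eta'=\mathcal{P}\eta+\rho$, and moving $\mathcal{P}$ across the inner product,
\[
\mu'(r)=2D(r)+\frac{\Re(\mathcal{P}'\eta,\eta)}{\|\eta\|^{2}}+\frac{2\Re(\rho,\mathcal{P}\eta)}{\|\eta\|^{2}}-\frac{2\mu\,\Re(\rho,\eta)}{\|\eta\|^{2}},\qquad D(r):=\frac{\|\mathcal{P}\eta\|^{2}\|\eta\|^{2}-(\mathcal{P}\eta,\eta)^{2}}{\|\eta\|^{4}}\geq 0,
\]
the nonnegativity of $D$ being Cauchy--Schwarz, which also gives $\|\mathcal{P}\eta\|/\|\eta\|=\sqrt{\mu^{2}+D}$. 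Bounding the last three terms by $(c_2+4c_1)|\mu|+(c_2+2c_1)\sqrt D+c_2$ using $\|\mathcal{P}'\eta\|\leq c_2(\|\mathcal{P}\eta\|+\|\eta\|)$ and $\|\rho\|\leq c_1\|\eta\|$, and then absorbing $(c_2+2c_1)\sqrt D\leq D+\tfrac14(c_2+2c_1)^{2}$ into the good term $2D$, I obtain the key inequality
\[
\mu'(r)\geq -C_1|\mu(r)|-C_2\qquad\text{on }(\rho_*,\bar r\,],
\]
with $C_1,C_2>0$ depending only on $c_1,c_2$.

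To finish I would run a Gr\"onwall/bootstrap estimate forward from any point where $\mu$ is large. On the region $\{\mu\geq 1\}$ the inequality gives $\mu'\geq-(C_1+C_2)\mu$, hence $r\mapsto\mu(r)e^{(C_1+C_2)r}$ is nondecreasing there; a standard bootstrap then shows that if $\mu(s_0)\geq 1$ for some $s_0\in(\rho_*,\bar r)$ then $\mu\geq 1$ persists on all of $[s_0,\bar r\,]$ once $\mu(s_0)$ is large enough, forcing $\mu(s_0)\leq e^{(C_1+C_2)(\bar r-\rho_*)}\max(\mu(\bar r),2)=:M^{*}$. Consequently $\mu\leq M^{*}$ on all of $(\rho_*,\bar r\,]$ and $\int_{\rho_*}^{\bar r}\mu<\infty$. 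On the other hand, integrating the first identity from $s$ to $\bar r$ gives $\log\|\eta(\bar r)\|^{2}-\log\|\eta(s)\|^{2}=\int_{s}^{\bar r}(2\mu+e)\,dr$; as $s\downarrow\rho_*$ the left-hand side tends to $+\infty$ because $\|\eta(s)\|\to 0$, and since $|e|\leq 2c_1$ this forces $\int_{\rho_*}^{\bar r}\mu=+\infty$ --- a contradiction. Hence no such $\bar r$ exists, i.e.\ $\eta\equiv 0$ on all of $[r_0,R)$, and the unique continuation property is established; combined with Aronszajn's theorem this will feed into Lemma~\ref{xzz}.

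The step I expect to be the main obstacle is the second one: justifying the differentiation of $r\mapsto(\mathcal{P}(r)\eta(r),\eta(r))$ and the attendant chain rule (precisely the reason the hypotheses stipulate differentiability of this scalar function and continuity of $\mathcal{P}\eta$, and that $\mathrm{Dom}(\mathcal{P}(r))=\mathfrak{H}_D$ is $r$-independent so that $\mathcal{P}$ may be moved across $(\cdot,\cdot)$), together with the bookkeeping of the error terms so that the hypothesis $\|\tfrac{d\mathcal{P}}{dr}\eta\|\leq c_2(\|\mathcal{P}\eta\|+\|\eta\|)$ is invoked exactly where the a priori unbounded ratio $\|\mathcal{P}\eta\|/\|\eta\|=\sqrt{\mu^2+D}$ enters, allowing it to be absorbed into the nonnegative term $D$.
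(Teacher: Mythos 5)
The paper gives no proof of Theorem~\ref{agth}: it is quoted as the classical Agmon--Nirenberg unique continuation theorem (cf.~\cite{Ag}) and then invoked in the proof of Lemma~\ref{xzz}, so there is no internal argument to measure yours against; what you have written is, in substance, the original Agmon--Nirenberg log-convexity (frequency-function) proof, and it is correct. The reduction to a last vanishing time $\rho_*$ with $\eta(\rho_*)=0$ and $\eta\neq0$ on $(\rho_*,\bar r\,]$ is sound; the identities $\frac{d}{dr}\log\|\eta\|^{2}=2\mu+e$ with $|e|\le 2c_1$, the formula for $\mu'$ with the nonnegative Cauchy--Schwarz defect $D$, the relation $\|\mathcal{P}\eta\|/\|\eta\|=\sqrt{\mu^{2}+D}$, the error bound $(c_2+4c_1)|\mu|+(c_2+2c_1)\sqrt{D}+c_2$ and the absorption of $(c_2+2c_1)\sqrt{D}$ into $2D$ all check out, yielding $\mu'\ge -C_1|\mu|-C_2$ on $(\rho_*,\bar r\,]$; the forward Gr\"onwall bootstrap then bounds $\mu$ above by a constant $M^*$ there, and integrating $\frac{d}{dr}\log\|\eta\|^{2}=2\mu+e$ from $s$ to $\bar r$ contradicts $\log\|\eta(s)\|^{2}\to-\infty$ as $s\downarrow\rho_*$ (correctly, only the upper bound on $\mu$ is needed, not integrability from below). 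The one step that is not a literal consequence of the hypotheses as stated is the product rule $\frac{d}{dr}(\mathcal{P}\eta,\eta)=(\frac{d\mathcal{P}}{dr}\eta,\eta)+2(\mathcal{P}\eta,\frac{d\eta}{dr})$: the statement only posits differentiability of the scalar function $r\mapsto(\eta,\mathcal{P}\eta)$ and a bound on $\frac{d\mathcal{P}}{dr}\eta$, so this identity has to be read as part of the intended meaning of the hypotheses (this is exactly how it is used in \cite{Ag} and in the gauge-theory literature, e.g.\ \cite{D2,FL}); you flag precisely this point, and with that reading your argument is complete.
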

	
	\begin{lemma}\label{xzz}Let $(X,g)$ be an oriented, closed and smooth Riemannian 4-manifold and $P\to X$ a principle $SU(2)$- or $SO(3)$-bundle on $X$.  For every perturbation parameter $(\tau^1,\tau^2,\tau^3,\theta_1,\theta_2)\in\mathcal{T}^r$, if $[A,B,0]\in\mathcal{B}_k(P)$ is a solution to the perturbed reduced Vafa-Witten equation
		\begin{equation}\label{yz}\left\{
			\begin{aligned}
				&d_A^*B+d_AC+\tau^1(B\centerdot\theta_1+[B\centerdot B]\centerdot\theta_2)=0\\
				&F_A^++\frac{1}{8}\tau^2[B\centerdot B]+\frac{1}{2}[B,C]+\tau^3B=0\\
			\end{aligned}\right.\end{equation}
		such that $B\not\equiv 0$ and $[B\centerdot B]\equiv 0$ on $X$, then $A$ is reducible on $X$.
	\end{lemma}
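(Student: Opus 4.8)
The plan is to strip off the two algebraic hypotheses, reduce the problem to an elliptic one, and then combine a pointwise Lie-algebra computation with a unique continuation argument. First, since $C\equiv 0$ and $[B\centerdot B]\equiv 0$, the system \eqref{yz} collapses to $d_A^*B+\tau^1(B\centerdot\theta_1)=0$ and $F_A^++\tau^3B=0$. The second equation is the decisive one: because $\tau^3\in C^r(GL(\Lambda^{2,+}))$ is a \emph{fibrewise invertible} bundle map, $B_x=0$ if and only if $F_A^+(x)=0$. Set $Z:=\{x\in X: B_x=0\}$, a closed set, and $U:=X\setminus Z$, which is open and, since $B\not\equiv 0$, nonempty.

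Next I would produce an $A$-parallel line over $U$. Examining the quadratic map $B\mapsto[B\centerdot B]$ fibrewise and writing $B=\sum_a\sigma_a\otimes\mu_a$ in a local orthonormal frame $(\sigma_1,\sigma_2,\sigma_3)$ of $\Lambda^{2,+}$, the identity $[B\centerdot B]=0$ is equivalent to $[\mu_a,\mu_b]=0$ for all $a,b$; since pairwise commuting elements of $\mathfrak{su}(2)$ span a line, $B_x$ takes values in a $1$-dimensional subspace $\mathfrak{l}_x\subset\mathfrak{su}(2)_{P,x}$ for every $x\in U$, and $\mathfrak{l}$ is a smooth real line subbundle of $\mathfrak{su}(2)_P|_U$. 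Choosing locally a unit section $\hat\xi$ of $\mathfrak{l}$ and writing $B=\beta\,\hat\xi$ with $\beta$ a nowhere-vanishing real self-dual $2$-form, the relation $|\hat\xi|\equiv 1$ gives $\nabla_A\hat\xi\perp\hat\xi$, so $\nabla_A\hat\xi$ is valued in $\mathfrak{l}^\perp$. Expanding $d_A^*(\beta\hat\xi)=(d^*\beta)\hat\xi-\sum_i(\iota_{e_i}\beta)\otimes\nabla_{e_i,A}\hat\xi$ and projecting the first equation onto $\mathfrak{l}^\perp$ (the term $\tau^1(B\centerdot\theta_1)$ lies in $\mathfrak{l}$) yields $\sum_i(\iota_{e_i}\beta)\otimes\nabla_{e_i,A}\hat\xi=0$. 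A nonzero self-dual $2$-form on a $4$-manifold is non-degenerate, so $\{\iota_{e_i}\beta\}_i$ is a basis of $\Lambda^1_x$; hence $\nabla_{e_i,A}\hat\xi=0$ for all $i$, i.e. $\nabla_A\hat\xi=0$ and $\mathfrak{l}$ is $A$-parallel over $U$.

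Finally I would globalize. By Theorem~\ref{tk} we may assume $(A,B,0)$ is $C^\infty$ after a gauge transformation. Because $B$ is self-dual, $d_AB$ is determined by $d_A^*B$ through the Hodge star, so $d_A^*B=-\tau^1(B\centerdot\theta_1)$ also controls $d_AB$, and $(d_A+d_A^*)B$ is a zeroth-order term linear in $B$; the operator $d_A+d_A^*\colon\Omega^{2,+}(\mathfrak{su}(2)_P)\to\Omega^1(\mathfrak{su}(2)_P)\oplus\Omega^3(\mathfrak{su}(2)_P)$ has injective symbol (its symbol kernel would be a decomposable self-dual $2$-form, hence zero), so $B$ solves an overdetermined-elliptic first-order equation. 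Applying the Agmon--Nirenberg Theorem~\ref{agth} along radial geodesics issuing from a point of $Z$, exactly as in \cite{G}, shows $B$ cannot vanish on any open subset, so $U$ is open and dense. Since $\mathfrak{l}$ is $A$-parallel on $U$ it varies only at the rate of the bounded connection form, while the smooth section $B$ has a finite-order leading homogeneous term at each $z\in Z$; comparing the two shows $\lim_{x\to z}\mathfrak{l}_x$ exists, so $\mathfrak{l}$ extends to an $A$-parallel real line subbundle of $\mathfrak{su}(2)_P$ over all of $X$, equivalently a nowhere-zero covariant-constant section of $\mathfrak{su}(2)_P$; therefore $\Stab(A)$ strictly contains $Z(SU(2))$ and $A$ is reducible.

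The Lie-algebra identity and the non-degeneracy step are routine; the genuinely hard part is the globalization, namely excluding that $B$ vanishes on an open set and propagating the reduction across $\{B=0\}$. This is precisely where the invertibility of $\tau^3$, the self-duality of $B$, the regularity Theorem~\ref{tk}, and the Agmon--Nirenberg Theorem~\ref{agth} have to be used together, following the template of \cite{G} and \cite{FL}.
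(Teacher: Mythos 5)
Your first two steps coincide with the paper's: the reduction of \eqref{yz} to $d_A^*B+\tau^1(B\centerdot\theta_1)=0$, $F_A^++\tau^3B=0$, the pointwise observation that $[B\centerdot B]\equiv0$ forces $B$ to be rank one on $U=\{B\neq0\}$, and the projection argument using the non-degeneracy of the nonvanishing self-dual form to get $\nabla_A\hat\xi=0$ on $U$ are all exactly the paper's argument on $X_B$ (the paper gets the density of $X_B$ from Aronszajn's theorem applied to $\mathcal{L}^*_{A,\tau^1,\theta_1}\mathcal{L}_{A,\tau^1,\theta_1}$ rather than from an Agmon--Nirenberg argument for $B$; your variant is plausible but is asserted, not carried out). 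The genuine gap is in the globalization, which you yourself identify as the hard step but then dispatch with an unsubstantiated claim: that since $B$ vanishes to finite order at each $z\in Z$ and the connection form is bounded, ``comparing the two shows $\lim_{x\to z}\mathfrak{l}_x$ exists'' and the extension is $A$-parallel. Nothing in the proposal proves this. Even granting finite vanishing order (which needs Aronszajn-type strong unique continuation, not Theorem~\ref{agth}), the leading homogeneous term of $B$ at $z$ is a nonzero $\mathfrak{su}(2)\otimes\Lambda^{2,+}$-valued homogeneous polynomial whose $\mathfrak{su}(2)$-direction can a priori depend on the direction of approach; more seriously, $Z=\{B=0\}$ has empty interior but can locally separate, so $U$ may be disconnected near $z$, different components of $U$ may carry different parallel lines, and boundedness of the connection gives no control relating them across $Z$. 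Showing that the reduction propagates across $Z$, and that the extended line (or section) is parallel there, is precisely what has to be proved, and your argument does not do it.

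This is also where your proposal diverges from, and falls short of, the paper's mechanism. The paper follows the Donaldson--Kronheimer template (\cite[Lemma 4.3.21]{D2}, \cite[\S5.3.2]{FL}): it works in radial gauge around a point $x_0\in X_B$, uses the second equation $F_A^++\tau^3B=0$ together with the rank-one structure of $B$ to eliminate $B$ and derive the radial flow equation $\frac{d\mathfrak{A}(r)}{dr}=*_{g_r}F_{\mathfrak{A}(r)}$, extends $\xi$ by radial parallel transport to $\hat\xi$ on the larger ball, and then applies the Agmon--Nirenberg Theorem~\ref{agth} to the difference $a(r)=\hat{\mathfrak{A}}(r)-\mathfrak{A}(r)$ of the connection and its $\hat\xi$-transform, concluding $a\equiv0$ and hence $d_A\hat\xi=0$ across the part of $Z$ met by the larger ball. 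In your proposal the second equation is used only to identify $Z$ with $\{F_A^+=0\}$ via invertibility of $\tau^3$ (a fact that is not needed), and Theorem~\ref{agth} is applied only to $B$; the curvature equation and the comparison of the two connections, which are what actually carry the reduction across the zero set, are absent. To repair the proof you would need to reinstate that ODE comparison (or supply a genuinely different argument excluding the separation/monodromy issue above), so as written the proposal does not establish the lemma.
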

	
	\begin{proof}
		For a solution $[A,B,0]\in\mathcal{B}_k(P)$ to \eqref{yz} such that $[B\centerdot B]\equiv 0$ on $X$, the equation \eqref{yz} can be reduced to
		\begin{equation}\label{yz2}\left\{
			\begin{aligned}
				&d_A^*B+\tau^1(B\centerdot\theta_1)=0,\\
				&F_A^++\tau^3B=0.\\
			\end{aligned}\right.\end{equation}
		
		For $\beta\in L_{k}^2(X,\mathfrak{su}(2)_P\otimes\Lambda^{2,+})$, define the linear map $$\mathcal{L}_{A,\tau^1,\theta_1}(\beta):=d_A^*\beta+\tau^1(\beta\centerdot\theta_1)\in L_{k-1}^2(X,\mathfrak{su}(2)_P\otimes\Lambda^1),$$
		then $\mathcal{L}_{A,\tau^1,\theta_1}(B)=0$ and also, $\mathcal{L}_{A,\tau^1,\theta_1}^*\mathcal{L}_{A,\tau^1,\theta_1}(B)=0$ where $\mathcal{L}_{A,\tau^1,\theta_1}^*$ is the $L^2$ adjoint of $\mathcal{L}_{A,\tau^1,\theta_1}$. So the Aronszajn's theorem (cf.  \cite[Remark 3]{Ar}, \cite[Theorem 1.8]{Kaz}) implies that $B$ has the unique continuation property. Let $X_B:=\{x\in X:B(x)\neq 0\}\subseteq X$, then $X_B$ is either $X$ or an open dense subset of $X$.
		
		Note that $[B\centerdot B]\equiv 0$ on $X$, $B$ is rank 1 on $X_B$ \cite[\S4.1.1]{Ma}, so there is $\xi\in \Omega^0(X_B,\mathfrak{su}(2)_P)$ with $\langle\xi,\xi\rangle=1$ and $\omega\in L_{k}^2(X,\Lambda^{2,+})$ such that $B=\xi\otimes\omega$ (cf. \cite[\S4.2]{Ma}). We have
		\begin{equation}
			\begin{aligned}\label{yz3}
				0&=d_A^*B+\tau^1(B\centerdot\theta_1)\\
				&=d_A^*(\xi\otimes\omega)+\tau^1(\xi\otimes\omega\centerdot\theta_1)\\
				&=-d_A\xi\centerdot\omega+\xi\otimes d^*\omega+\xi\otimes\tau^1(\omega\centerdot\theta_1).\\
			\end{aligned}
		\end{equation}
		$\langle\xi,\xi\rangle=1$ implies $\langle d_A\xi,\xi\rangle=0$, take inner product with $\xi$ we get $d^*\omega+\tau^1(\omega\centerdot\theta_1)=0$, so $d_A\xi\centerdot\omega=0$ on $X_B$ and $d_A\xi=0$ on $X_B$ (cf.  \cite[\S4.2]{Ma}). That's means $A$ is reducible on $X_B$.
		
		Next we will show that in fact $A$ is reducible on $X$. The method used here is analogous to the cases of the ASD equations \cite[Lemma 4.3.21]{D2} and $PU(2)$-monopole equations \cite[\S5.3.2]{FL}.
		
		Choose a point $x_0\in X_B$ and let $\rho$ be the injectivity radius of $X$ at $x_0$, there is a positive number $0<\epsilon<\frac{1}{2}\rho$ such that the geodesic ball $B(x_0,\epsilon)\subset X_B$. We will show that $A$ is reducible on $B(x_0,2\epsilon)$. We trivialize $\mathfrak{su}(2)_P$ over $B(x_0,2\epsilon)-\{x_0\}$ using parallel transport along radial geodesics (cf.  \cite[\S2.3.1]{D2}), this gives an isomorphism 
		\begin{equation}\label{is}
			\mathfrak{su}(2)_P|_{B(x_0,\epsilon)-\{x_0\}}\cong\mathfrak{su}(2)_{x_0}\times S^3\times(0,2\epsilon)
		\end{equation}
		where $\mathfrak{su}(2)_{x_0}\cong\mathfrak{su}(2)$. Note that $A$ is in radial gauge with respect to the point $x_0$, so the radial component of connection $A$ in this trivialization is zero. We let $\mathfrak{A}:=\mathfrak{A}(r)$, $r\in(0,2\epsilon)$, denote the resulting one-parameter family of the connection on the bundle $\mathfrak{su}(2)_{x_0}\times S^3$ over $S^3$. A section $B$ of the bundle $L^2_k(X,\mathfrak{su}(2)_P\otimes\Lambda^{2,+})$ over $B(x_0,2\epsilon)-\{x_0\}$ pulls back, via the isomorphism $\Lambda^{2,+}\otimes\mathfrak{su}(2)_P|_{B(x_0,2\epsilon)-\{x_0\}}\cong\Lambda^{2,+}\otimes\mathfrak{su}(2)_{x_0}\times S^3\times(0,2\epsilon)$, to a one-parameter family of sections $\mathfrak{B}(r)$ of the bundle $\Lambda^{2,+}\otimes\mathfrak{su}(2)_{x_0}\times S^3$ over $S^3$.
		
		Under the isomorphism \eqref{is}, $\xi$ can be viewed as a map to the structure group $SU(2)$ or $SO(3)$ and it satisfies
		$$0=\langle d_A\xi,\frac{\partial}{\partial r}\rangle=\frac{\partial\xi}{\partial r},$$
		hence we can extend $\xi$ by parallel
		translation via $A$ along radial geodesics emanating from $x_0$ to a gauge transformation $\hat{\xi}$ on connections on the bundle $\mathfrak{su}(2)_{x_0}\times S^3\times(0,2\epsilon)$. 
		
		Let $\hat{A}:=\hat{\xi}^{-1}\cdot A=A-(d_A\hat{\xi})\hat{\xi}^{-1}$, note that $d_A\hat{\xi}=0$ on $B(x_0,\epsilon)$, we have $\hat{A}=A$ on $B(x_0,\epsilon)$. Under the isomorphism $S^3\times(0,2\epsilon)\cong B(x_0,2\epsilon)-\{x_0\}$, the metric $g$ on $B(x_0,2\epsilon)-\{x_0\}$ pulls back to
		$$g=dr^2+g_r=dr^2+\gamma(r,\theta)d\theta^2,$$
		where $g_r:=\gamma(r,\theta)d\theta^2$ is the metric on $S^3$ pulled back from the restriction $g|_{S^3(x_0,r)}$ to the geodesic sphere $S^3(x_0,r):=\{x\in X:d_g(x,x_0)=r\}$. Denote by $*_{g_r}$ the Hodge star operator for the metric $g_r$ on $S^3$ and $*_g$ the Hodge star operator for the metric $g$ on $X$. For any differential form $\eta$ on $S^3$, we have 
		\begin{align*}
			*_g\eta&=dr\wedge*_{g_r}\eta,\\
			*_{g_r}\eta&=*_g(dr\wedge\eta).
		\end{align*}
		
		Let $\{e^1,e^2,e^3\}$ be an oriented, orthonormal frame for $T^*S^3$, then $\{e^0:=dr,e^1,e^2,e^3\}$ is an oriented orthonormal basis for $T^*X$ over $B(x_0,2\epsilon)-\{x_0\}$. Note that the linear space of self-dual $\R$-valued two forms on $X$ is 3-dimensional $C(X)$-linear space and $\R$-valued two forms on $S^3$ is 3-dimensional $C(S^3)$-linear space, hence a endomorphism $\tau^3\in C^r(\End_\R(\Lambda^{2,+}(X)))$ can also be regarded as an endomorphism of $L_k^2(S^3,\Lambda^2)$.
		
		With all above understood, $\{e^0\wedge e^1+e^2\wedge e^3,e^0\wedge e^2+e^3\wedge e^1,e^0\wedge e^3+e^1\wedge e^2\}$ is an orthonormal basis of $\Lambda^{2,+}(B(x_0,2\epsilon))$ (cf.  \cite[\S4.1.1]{Ma}). Let $A=\sum_{i=0}^3A_ie^i$ and $F_A=\sum_{0\leq i<j\leq 3}F_{ij}e^i\wedge e^j$ be the linear representation in the corresponding basis. The radial component of connection $A$ is zero implies $A_0=0$ on $B(x_0,2\epsilon)$ and we have $F_{0j}=\frac{dA_j}{dr}$, hence
		\begin{align*}
			F_A^+=&\frac{1}{2}(1+*_g)F_A\\
			=&\frac{1}{2}(F_{01}+F_{23})(e^0\wedge e^1+e^2\wedge e^3)+\frac{1}{2}(F_{02}+F_{31})(e^0\wedge e^2+e^3\wedge e^1)\\
			&+\frac{1}{2}(F_{03}+F_{12})(e^0\wedge e^3+e^1\wedge e^2)\\
			=&\frac{1}{2}e^0\wedge(F_{01}e^1+F_{02}e^2+F_{03}e^3)+\frac{1}{2}(F_{23}e^2\wedge e^3+F_{31}e^3\wedge e^1+F_{12}e^1\wedge e^2)\\
			=&\frac{1}{2}dr\wedge\frac{d\mathfrak{A}(r)}{dr}+\frac{1}{2}F_{\mathfrak{A(r)}}.
		\end{align*}
		
		Let $\tau^3=\begin{pmatrix}
			\tau^3_{11} & \tau^3_{12} & \tau^3_{13}\\
			\tau^3_{21} & \tau^3_{22} & \tau^3_{23}\\
			\tau^3_{31} & \tau^3_{32} & \tau^3_{33}\\
		\end{pmatrix}$ be the representation matrix under the basis $\{e^0\wedge e^1+e^2\wedge e^3,e^0\wedge e^2+e^3\wedge e^1,e^0\wedge e^3+e^1\wedge e^2\}$, it's not hard to see that the representation matrix of $\tau^3$ under the basis $\{e^2\wedge e^3, e^3\wedge e^1,e^1\wedge e^2\}$ of $L_k^2(S^3,\Lambda^2)$ is the same one. Let
		$$\omega=B_1(e^0\wedge e^1+e^2\wedge e^3)+B_2(e^0\wedge e^2+e^3\wedge e^1)+B_3(e^0\wedge e^3+e^1\wedge e^2)$$
		where $B_i\in L_k^2(B(x_0,2\epsilon),\R)$, $i=1,2,3$, then 
		$$\mathfrak{B}(r)=\xi\otimes(B_1e^2\wedge e^3+B_2e^3\wedge e^1+B_3e^1\wedge e^2)$$
		and we have
		\begin{align*}
			\tau^3B=&\tau^3(\xi\otimes\omega)\\
			=&\xi\otimes\big(B_1\tau^3(e^0\wedge e^1+e^2\wedge e^3)+B_2\tau^3(e^0\wedge e^2+e^3\wedge e^1)+B_3\tau^3(e^0\wedge e^3+e^1\wedge e^2)\big)\\
			=&\xi\otimes\Big(B_1\big(\tau^3_{11}(e^0\wedge e^1+e^2\wedge e^3)+\tau^3_{12}(e^0\wedge e^2+e^3\wedge e^1)+\tau^3_{13}(e^0\wedge e^3+e^1\wedge e^2)\big)+\\
			&B_2\big(\tau^3_{21}(e^0\wedge e^1+e^2\wedge e^3)+\tau^3_{22}(e^0\wedge e^2+e^3\wedge e^1)+\tau^3_{23}(e^0\wedge e^3+e^1\wedge e^2)\big)+\\
			&B_3\big(\tau^3_{31}(e^0\wedge e^1+e^2\wedge e^3)+\tau^3_{32}(e^0\wedge e^2+e^3\wedge e^1)+\tau^3_{33}(e^0\wedge e^3+e^1\wedge e^2)\big)\Big)\\
			=&\xi\otimes e^0\wedge\big((B_1\tau^3_{11}+B_2\tau^3_{21}+B_3\tau^3_{31})e^1+(B_1\tau^3_{12}+B_2\tau^3_{22}+B_3\tau^3_{32})e^2\\
			&+(B_1\tau^3_{13}+B_2\tau^3_{23}+B_3\tau^3_{33})e^3\big)+\xi\otimes\big((B_1\tau^3_{11}+B_2\tau^3_{21}+B_3\tau^3_{31})e^2\wedge e^3\\
			&+(B_1\tau^3_{12}+B_2\tau^3_{22}+B_3\tau^3_{32})e^3\wedge e^1+(B_1\tau^3_{13}+B_2\tau^3_{23}+B_3\tau^3_{33})e^1\wedge e^2\big)\\
			=&dr\wedge*_{g_r}\tau^3\mathfrak{B}(r)+\tau^3\mathfrak{B}(r).
		\end{align*}
		So $F_A^++\tau^3B=0$ is equivalent to
		$$dr\wedge\left(\frac{1}{2}\frac{d\mathfrak{A}(r)}{dr}+*_{g_r}\tau^3\mathfrak{B}(r)\right)+\left(\frac{1}{2}F_{\mathfrak{A(r)}}+\tau^3\mathfrak{B}(r)\right)=0,$$
		which means
		\begin{equation}
			\left\{
			\begin{aligned}
				\frac{1}{2}\frac{d\mathfrak{A}(r)}{dr}+*_{g_r}\tau^3\mathfrak{B}(r)&=0\\
				\frac{1}{2}F_{\mathfrak{A(r)}}+\tau^3\mathfrak{B}(r)&=0\\
			\end{aligned}\right.
		\end{equation}
		and hence on $B(x_0,2\epsilon)$,
		\begin{equation}\label{asd3}
			\frac{d\mathfrak{A}(r)}{dr}=*_{g_r}F_{\mathfrak{A(r)}}.
		\end{equation}
		$\hat{\mathfrak{A}}(r)$ and $\mathfrak{A}(r)$ are both solutions to \eqref{asd3}, and $\hat{\mathfrak{A}}(r)=\mathfrak{A}(r)$ when $0<r\leq\epsilon$. Let $a(t):=\hat{\mathfrak{A}}(r)-\mathfrak{A}(r)$, then $a(r)=0$ when $0<r\leq\epsilon$ and
		\begin{equation}
			\begin{aligned}\label{ode}
				\frac{da}{dr}&=*_{g_r}\left(d\hat{\mathfrak{A}}(r)-d\mathfrak{A}(r)+\frac{1}{2}[\hat{\mathfrak{A}}(r),\hat{\mathfrak{A}}(r)]-\frac{1}{2}[\mathfrak{A}(r),\mathfrak{A}(r)]\right)\\
				&=*_{g_r}\left(da+\frac{1}{2}[\mathfrak{A},a]+\frac{1}{2}[a,\hat{\mathfrak{A}}]\right).
			\end{aligned}
		\end{equation}
		Hence
		$$\left|\left|\frac{da}{dr}-*_{g_r}da\right|\right|_{L_k^2}\leq c_1||a||_{L_k^2}$$
		for some constant $c_1$ depending on $\hat{\mathfrak{A}}(r)$ and $\mathfrak{A}(r)$. 
		
		On $\Omega^1(S^3)$, $*_{g_r}^2=(-1)^{1*(3-1)}=1$. Hence for any two 1-forms $a_1,a_2\in L_k^2((S^3,g_r),\Lambda^1)$, we have
		\begin{align*}
			&\int_{S^3}\langle a_1,*_{g_r}da_2\rangle d\mathrm{vol}_r\\
			=&\int_{S^3}\langle a_1,*_{g_r}d*_{g_r}^2a_2\rangle d\mathrm{vol}_r=\int_{S^3}\langle a_1,(-1)^{3(2+1)+1}d^{*_{g_r}}*_{g_r}a_2\rangle d\mathrm{vol}_r\\
			=&\int_{S^3}\langle da_1,*_{g_r}a_2\rangle d\mathrm{vol}_r=\int_{S^3}\langle*_{g_r}da_1,a_2\rangle d\mathrm{vol}_r,
		\end{align*}
		which means the operators $*_{g_r}d$ are self-adjoint with respect to the metrics $g_r$, $r\in(0,2\epsilon)$.
		Let $d\mathrm{vol}$ be the volume form on $S^3$ defined by the standard metric $g_{std}$, then
		$$d\mathrm{vol}_r=h_r^2d\mathrm{vol},~r\in(0,2\epsilon)$$
		for some positive function $h_r$ on $S^3$. Define the operators $\mathcal{Q}_r:=h_r\circ(*_{g_r}d)\circ h_r^{-1}$ for every $r\in(0,2\epsilon)$ and Hilbert-space isomorphisms $L_k^2((S^3,g_r),\Lambda^1)\to L_k^2((S^3,g_{std}),\Lambda^1)$ by $a\to\alpha:=h_ra$, then
		\begin{align*}
			&\int_{S^3}\langle \mathcal{Q}_r(\alpha_1),\alpha_2\rangle d\mathrm{vol}=\int_{S^3}\langle h_r\circ(*_{g_r}d)\circ h_r^{-1}h_ra_1,h_ra_2\rangle d\mathrm{vol}\\
			=&\int_{S^3}\langle *_{g_r}da_1,a_2\rangle d\mathrm{vol}_r=\int_{S^3}\langle a_1,*_{g_r}da_2\rangle d\mathrm{vol}_r\\
			=&\int_{S^3}\langle h_ra_1,h_r\circ(*_{g_r}d)\circ h_r^{-1}h_ra_2\rangle d\mathrm{vol}=\int_{S^3}\langle \alpha_1,\mathcal{Q}_r(\alpha_2)\rangle d\mathrm{vol},
		\end{align*}
		therefore $\mathcal{Q}_r$ is self-adjoint with respect to $g_{std}$ for every $r\in(0,2\epsilon)$ and has dense domain $L_1^2((S^3,g_{std}),\Lambda^1)$. Since $a=h_r^{-1}\alpha$, we have
		$$\frac{da}{dr}=-h_r^{-2}\frac{dh_r}{dr}\alpha+h_r^{-1}\frac{d\alpha}{dr}$$
		and subsituting into \eqref{ode} gives
		\begin{equation}
			\begin{aligned}
				-h_r^{-2}\frac{dh_r}{dr}\alpha+h_r^{-1}\frac{d\alpha}{dr}&=h_r^{-1}\mathcal{Q}_rh_ra+\frac{1}{2}*_{g_r}([\mathfrak{A},h_r^{-1}\alpha]+[h_r^{-1}\alpha,\hat{\mathfrak{A}}]),\\
				\frac{d\alpha}{dr}&=\mathcal{Q}_r\alpha+\frac{1}{2}*_{g_r}([\mathfrak{A},\alpha]+[\alpha,\hat{\mathfrak{A}}])+h_r^{-1}\frac{dh_r}{dr}\alpha
			\end{aligned}
		\end{equation}
		for $r\in(0,2\epsilon)$. On compact interval $[\frac{1}{2}\epsilon,2\epsilon]$, the continuous function $h_r^{-1}\frac{dh_r}{dr}$ is bounded and hence there is a constant $c_1$ independent of $a$ such that
		$$\left|\left|\frac{d\alpha}{dr}-\mathcal{Q}_r\alpha\right|\right|_{L_k^2}\leq c_1||\alpha||_{L_k^2},~\frac{1}{2}\epsilon\leq r\leq2\epsilon.$$
		Also, the definition of $\mathcal{Q}_r$ yields the pointwise bounds
		$$\left|\left(\frac{d\mathcal{Q}_r}{dr}a\right)(r)\right|\leq c'(|(\nabla a)(r)|+|a(r)|)$$
		for $\frac{1}{2}\epsilon\leq r\leq2\epsilon$ and some constant $c'$ independent of $a$, where $\nabla$  denotes covariant derivatives on $\Lambda^{1}\otimes\mathfrak{su}(2)_{x_0}\times S^3$ over $S^3$ which is independent of $r$. Thus the standard elliptic estimate for $\mathcal{Q}_r$ implies
		$$\left|\left|\frac{d\mathcal{Q}_r}{dr}a\right|\right|_{L_k^2}\leq c_2(||\mathcal{Q}_ra||_{L_k^2}+||a||_{L_k^2})$$
		for $\frac{1}{2}\epsilon\leq r\leq2\epsilon$ and some constant $c_2$ independent of $a$. The conditions of Theorem \ref{agth} have been verified and $a(r)=0$ when $r\in(0,\epsilon)$, so we must have $a(r)=0$ for $r\in(0,2\epsilon)$, which means $d_A\hat{\xi}=0$ on $B(x_0,2\epsilon)$.
		
		By applying the above argument to every point $x_0$ of the open dense subset $X_B\subset X$, we can see that the extension $\hat{\xi}$ of $\xi$ can be extended to all of $X$ and $d_A\hat{\xi}=0$ on $X$, hence $A$ is reducible on $X$.
	\end{proof}
	
	\begin{lemma}\label{xz}Let $(X,g)$ be an oriented, closed and smooth Riemannian 4-manifold and $P\to X$ a principle $SU(2)$- or $SO(3)$-bundle on $X$. Then there is a first-category subset $\mathcal{T}^r_1\subset\mathcal{T}^r$ such that for every perturbation parameter $(\tau^1,\tau^2,\tau^3,\theta_1,\theta_2)\in\mathcal{T}^r-\mathcal{T}^r_1$, if $[A,B,0]\in\mathcal{B}_k^\diamond(P)$ is a solution to the perturbed reduced Vafa-Witten equation
		\eqref{yz}, then $B\centerdot\theta_1+[B\centerdot B]\centerdot\theta_2$ is rank 3 on some open subset $U\subset X$.
	\end{lemma}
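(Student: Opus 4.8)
\emph{Step 1.} The plan is to combine a pointwise linear‑algebra computation (which carries the geometric content) with a Sard--Smale genericity argument (where the real difficulty lies). First I would invoke Lemma~\ref{xzz}: since $[A,B,0]\in\mathcal{B}_k^\diamond(P)$ the connection $A$ is irreducible, so the contrapositive of that lemma forbids $[B\centerdot B]\equiv 0$ on $X$. Hence $U_1:=\{x\in X:[B\centerdot B](x)\neq0\}$ is a nonempty open set, and at each of its points the tensor $B(x)$, viewed as a linear map $\Lambda^{2,+}_x\to\mathfrak{su}(2)_x$, has $\rank\geq 2$ (a rank‑one $B$ kills $[B\centerdot B]$, as recalled in the proof of Lemma~\ref{xzz}). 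It therefore suffices to find the open set $U$ inside $U_1$.

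\emph{Step 2: pointwise full rank.} Next I would show that at every $x$ with $\rank B(x)\geq2$ the linear subspace
\[
W_x:=\{\,B(x)\centerdot\alpha+[B\centerdot B](x)\centerdot\beta \;:\; \alpha,\beta\in\Lambda^1_x\,\}\ \subset\ \mathrm{Hom}(T_xX,\mathfrak{su}(2)_x)
\]
contains an element of rank $3$. Fix a fibrewise orthonormal basis $\omega_1,\omega_2,\omega_3$ of $\Lambda^{2,+}_x$ with associated complex structures $J_1,J_2,J_3$ on $T_xX$ (so $\omega_a\centerdot\alpha=J_a\alpha$, and $\{\alpha,J_1\alpha,J_2\alpha,J_3\alpha\}$ is a basis of $T^*_xX$ whenever $\alpha\neq0$), and write $B(x)=\sum_a\omega_a\otimes\xi_a$. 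If $\rank B(x)=3$ then $\xi_1,\xi_2,\xi_3$ span $\mathfrak{su}(2)_x$ and, for $\beta=0$ and any $\alpha\neq0$, the homomorphism $v\mapsto\sum_a\langle J_a\alpha,v\rangle\xi_a$ is already surjective. If $\rank B(x)=2$, I rotate the frame by $SO(3)$ (which preserves the quaternion relations among the $J_a$) so that $\omega_3\in\ker B(x)$, i.e.\ $\xi_3=0$ while $\xi_1,\xi_2$ are linearly independent; then $\{\xi_1,\xi_2,[\xi_1,\xi_2]\}$ is a basis of $\mathfrak{su}(2)_x$ and, since only the antisymmetric (cross‑product) part of the quadratic term contributes, $[B\centerdot B](x)=c\,\omega_3\otimes[\xi_1,\xi_2]$ for a nonzero constant $c$; therefore
\[
B(x)\centerdot\alpha+[B\centerdot B](x)\centerdot\beta=J_1\alpha\otimes\xi_1+J_2\alpha\otimes\xi_2+c\,J_3\beta\otimes[\xi_1,\xi_2].
\]
In the basis $\{\xi_1,\xi_2,[\xi_1,\xi_2]\}$ this homomorphism has rows $J_1\alpha,J_2\alpha,c\,J_3\beta\in T^*_xX$; choosing $\alpha\neq0$ and $\beta\notin J_3^{-1}\mathrm{span}(J_1\alpha,J_2\alpha)$ makes these three covectors independent, so the rank is $3$. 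Consequently, for every $x\in U_1$, the condition $\rank\big(B(x)\centerdot\alpha+[B\centerdot B](x)\centerdot\beta\big)\leq2$ defines a proper Zariski‑closed subset $\Sigma_x\subsetneq\Lambda^1_x\oplus\Lambda^1_x$, the common zeros of the $3\times3$ minors.

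\emph{Step 3: from pointwise to generic.} On the solution set the first line of \eqref{yz} gives $\tau^1\big(B\centerdot\theta_1+[B\centerdot B]\centerdot\theta_2\big)=-d_A^*B$, hence $B\centerdot\theta_1+[B\centerdot B]\centerdot\theta_2=-(\tau^1)^{-1}d_A^*B$; as $\tau^1$ is a fibrewise automorphism of $\Lambda^1$, the rank we must control equals the pointwise rank of $d_A^*B$, a gauge‑invariant quantity. I would fix a countable dense set $\{x_j\}\subset X$ and consider the ``bad locus'' $\mathcal{Z}\subset\mathcal{T}^r\times\mathcal{B}_k^\diamond(P)$ of tuples $(\tau^1,\tau^2,\tau^3,\theta_1,\theta_2,[A,B,0])$ solving \eqref{yz} with $\rank\big(B\centerdot\theta_1+[B\centerdot B]\centerdot\theta_2\big)\leq2$ everywhere on $X$; by Step 1, $U_1$ is nonempty and meets some $x_j$, so $\mathcal{Z}\subset\bigcup_j\mathcal{Z}_j$ with $\mathcal{Z}_j$ the part on which $\rank B(x_j)\geq2$. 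For each $j$ I would augment the perturbed reduced Vafa--Witten section by the $3\times3$ minors of its first component evaluated at $x_j$: by Step 2 the perturbation directions $\delta\theta_1,\delta\theta_2$ alone already sweep this finite‑dimensional datum over $W_{x_j}$, which lies in no proper determinantal stratum, so the enlarged system is transverse to the smooth stratum of the rank‑$\leq2$ locus $M_{\leq2}$ (of codimension $2$). Hence, after a finite stratification by the rank at $x_j$, $\mathcal{Z}_j$ is a Banach submanifold on which the projection $\pi:\mathcal{T}^r\times\mathcal{B}_k^\diamond(P)\to\mathcal{T}^r$ restricts to a Fredholm map of strictly negative index, and by the Sard--Smale theorem $\pi(\mathcal{Z}_j)$ is of first category. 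Then $\mathcal{T}^r_1:=\bigcup_j\pi(\mathcal{Z}_j)$, a countable union, is the required set.

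\emph{Main obstacle.} The hard part is Step 3, specifically the transversality of the augmented system without circularity: because the solution $[A,B,0]$ itself moves with the perturbation parameter, one cannot fix $(A,B)$ and merely vary $(\theta_1,\theta_2)$, so one must argue at the bad locus directly, where the parameter‑fixed linearization $d^1_{(A,B,0)}$ of the equation need not be surjective. This calls for a Freed--Uhlenbeck‑type ``universal'' argument, exhibiting the bad locus as a countable union of negative‑index Banach submanifolds without presupposing the global transversality of $\mathcal{RVW}$ that the main theorem is after; the output of Step 2 is exactly what feeds the $\delta\theta_1,\delta\theta_2$ directions with the cokernel directions missing from the minor equations, and what remains is bookkeeping with the singular strata of the determinantal varieties and with the regularity statements of Theorems~\ref{tj} and~\ref{tk}.
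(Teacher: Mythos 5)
Your Step 1 and the pointwise linear algebra of Step 2 are fine and coincide with the opening of the paper's argument (Lemma~\ref{xzz} gives $[B\centerdot B]\not\equiv0$, hence an open set on which $\rank B\geq2$). The genuine gap is Step 3, which is where you place all the weight. Your conclusion requires that each $\mathcal{Z}_j$ be a Banach submanifold whose projection to $\mathcal{T}^r$ is Fredholm of strictly negative index, and this is never established: to give $\mathcal{Z}_j$ a manifold structure you need the parametrized linearization of $\mathcal{RVW}$, augmented by the minor conditions at $x_j$, to be surjective along the bad locus, but on that locus $B\centerdot\theta_1+[B\centerdot B]\centerdot\theta_2$ has rank $\leq2$ by definition, so the very mechanism that later kills cokernel elements (pairing against $\delta\tau^1\bigl(B\centerdot\theta_1+[B\centerdot B]\centerdot\theta_2\bigr)$, which needs rank $3$) is unavailable there; sweeping the finite-dimensional minor data with $\delta\theta_1,\delta\theta_2$ says nothing about the infinite-dimensional cokernel of the equation component. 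You yourself flag this circularity as the ``main obstacle'' and defer it to an unspecified Freed--Uhlenbeck-type argument plus stratification ``bookkeeping''; but that deferred step is exactly the content of the lemma, so as written the proposal does not show that $\mathcal{T}^r_1$ is of first category.

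The paper's proof avoids all of this machinery because the bad parameter set can be taken to be explicit and independent of the solution: $\mathcal{T}^r_1:=\{(\tau^1,\tau^2,\tau^3,\theta_1,\theta_2):\theta_1\wedge\theta_2=0\ \text{on some open subset of}\ X\}$, which is nowhere dense, hence first category. On the open set $U$ where $[B\centerdot B]\neq0$, writing $B$ in its singular-value frame, $B=\sum_iB_i\,\eta_i\otimes\sigma_i$, one gets $B\centerdot\theta_1+[B\centerdot B]\centerdot\theta_2=\sum_i\eta_i\otimes\sigma_i\centerdot\gamma_i$ with $\gamma_1=B_1\theta_1-8B_2B_3\theta_2$ and cyclic permutations; degeneracy of the rank forces one of the $\gamma_i$ to vanish on $U$ (via Lemma~\ref{xxy}), and since, say, $B_2B_3\neq0$ on $U$, any such vanishing makes $\theta_1$ and $\theta_2$ pointwise proportional on $U$, i.e.\ $\theta_1\wedge\theta_2=0$ there, contradicting the choice of parameter. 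So the genericity needed is an elementary condition on the pair $(\theta_1,\theta_2)$ alone, with no Sard--Smale argument, no determinantal stratification, and no auxiliary evaluation maps; your observation that the term in question equals $-(\tau^1)^{-1}d_A^*B$ is correct but plays no role in closing the argument.
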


	\begin{proof}
		Define $\mathcal{T}^r_1:=\{(\tau^1,\tau^2,\tau^3,\theta_1,\theta_2)\in\mathcal{T}^r:\theta_1\wedge\theta_2=0$ on some open subset of $X\}$, it's not hard to see that $\mathcal{T}^r_1$ is a nowhere dense, hence first-category subset of $\mathcal{T}^r$. Fix any $(\tau^1,\tau^2,\tau^3,\theta_1,\theta_2)\in\mathcal{T}^r-\mathcal{T}^r_1$ and 
		let $[A,B,0]\in\mathcal{B}_k^\diamond(P)$ be a solution to \eqref{yz}. The definition of $\mathcal{B}_k^\diamond(P)$ implies that $A$ is irreducible on $X$, hence by Lemma \ref{xzz} we see that $[B\centerdot B]\not\equiv0$ on $X$, so there is an open subset $U\subset X$ such that $[B\centerdot B]\neq 0$ on $U$. 
		
		Choose local orthogonal basis $\{\sigma_1,\sigma_2,\sigma_3\}$ of $L_{k}^2(U,\Lambda^{2,+})$ and $\{\eta_1,\eta_2,\eta_3\}$ of $\mathfrak{su}(2)_P$ with $\sigma_1\centerdot\sigma_2=-2\sigma_3$, $[\eta_1,\eta_2]=2\eta_3$ and cyclic permutations on $U$ (cf. Lemma~\ref{xxy}) such that $B=B_1\eta_1\otimes\sigma_1+B_2\eta_2\otimes\sigma_2+B_3\eta_3\otimes\sigma_3$, $B_i\in L_k^2(U,\R)$, $i=1,2,3$, cf.  \cite[\S4.1.1]{Ma}. Then
		\begin{align*}
			[B\centerdot B]=&-8B_2B_3\eta_1\otimes\sigma_1-8B_3B_1\eta_2\otimes\sigma_2-8B_1B_2\eta_3\otimes\sigma_3,\\
			B\centerdot\theta_1+[B\centerdot B]\centerdot\theta_2=&\eta_1\otimes\sigma_1\centerdot(B_1\theta_1-8B_2B_3\theta_2)+\eta_2\otimes\sigma_2\centerdot(B_2\theta_1-8B_3B_1\theta_2)\\
			&+\eta_3\otimes\sigma_3\centerdot(B_3\theta_1-8B_1B_2\theta_2).
		\end{align*}
		Note that $[B\centerdot B]\neq 0$ on $U$, without loss of generality we assume $B_2B_3\neq 0$ on $U$.
		If the rank of $B\centerdot\theta_1+[B\centerdot B]\centerdot\theta_2$ on $U$ is less than 3, then
		one of $\eta_1\otimes\sigma_1\centerdot(B_1\theta_1-8B_2B_3\theta_2)$, $\eta_2\otimes\sigma_2\centerdot(B_2\theta_1-8B_3B_1\theta_2)$,
		$\eta_3\otimes\sigma_3\centerdot(B_3\theta_1-8B_1B_2\theta_2)$
		is zero on $U$, Lemma~\ref{xxy} shows that this is equivalent to one of $B_1\theta_1-8B_2B_3\theta_2,B_2\theta_1-8B_3B_1\theta_2,B_3\theta_1-8B_1B_2\theta_2$ is zero on $U$. But
		\begin{align*}
			B_1\theta_1-8B_2B_3\theta_2=0&\Rightarrow\theta_2=\frac{B_1}{8B_2B_3}\theta_1,\\
			B_2\theta_1-8B_3B_1\theta_2=0&\Rightarrow\theta_1=\frac{8B_3B_1}{B_2}\theta_2,\\
			B_3\theta_1-8B_1B_2\theta_2=0&\Rightarrow\theta_1=\frac{8B_1B_2}{B_3}\theta_2
		\end{align*}
		on $U$ and anyone of the three equalities implies $\theta_1\wedge\theta_2=0$ on $U$, contradicts to the choice of $\theta_1$ and $\theta_2$. Hence $B\centerdot\theta_1+[B\centerdot B]\centerdot\theta_2$ is rank 3 on $U$.
	\end{proof}
	
	Now we prove that $(\phi,\psi)$ is zero on an open subset of $X$. For a perturbation parameter $(\tau^1,\tau^2,\tau^3,\theta_1,\theta_2)\in\mathcal{T}^r-\mathcal{T}^r_1$, let $[A,B,0]\in\mathcal{B}_k^\diamond(P)$ be a solution to the corresponding perturbed reduced Vafa-Witten equation \eqref{yz}. Lemma \ref{xzz} implies that there is an open subset $V\subset X$ such that $[B\centerdot B]\neq 0$ on $V$. In equations \eqref{yy}, set $(\delta\tau^1,\delta\tau^3,\delta\theta_1,\delta\theta_2,a,b,c)=0$ we have
	$$\langle\delta\tau^2[B\centerdot B],\psi\rangle_{L^2(X)}=0,~\forall\delta\tau^2\in C^r(\mathfrak{gl}(\Lambda^{2,+})).$$
	Similiarly,
	$$\langle\delta\tau^3B,\psi\rangle_{L^2(X)}=0,~\forall\delta\tau^3\in C^r(\mathfrak{gl}(\Lambda^{2,+})).$$
	On $V$, $[B\centerdot B]\neq 0$ implies the rank of $B$ is at least 2\cite[\S4.1.1]{Ma}, so Lemma~\ref{xx} and  \cite[Lemma 2.3]{Fe} imply $\psi\equiv 0$ on $V$. Then set $(\delta\tau^2,\delta\tau^3,\delta\theta_1,\delta\theta_2,a,b,c)=0$ we can get
	$$\delta\tau^1(B\centerdot\theta_1+[B\centerdot B]\centerdot\theta_2)=0,~\forall\delta\tau^1\in C^r(\mathfrak{gl}(\Lambda^1))$$
	on $V$, then Lemma~\ref{xz} and  \cite[Lemma 2.3]{Fe} imply $\phi\equiv 0$ on $V$.
	
	Hence we must have $(\phi,\psi)\equiv 0$ on $V$, so $(\phi,\psi)\equiv 0$ on $X$ by the unique continuation for the Laplacian $(D\mathcal{RVW})_{\Theta}(D\mathcal{RVW})_{\Theta}^*$ and we finish the establishment of the transversality.
	
	By the slice result (cf.  \cite[Proposition 2.8]{FL}), $T_{[A,B,0]}\mathcal{B}_k^\diamond(P)$ may be identified with $\mathrm{Ker}~d_{(A,B,0)}^{0,*}$. For $(a,b,c)\in\mathrm{Ker}~d_{(A,B,0)}^{0,*}$, we have
	$$\begin{aligned}
		(D\mathcal{RVW})_{(A,B,0)}(0,1,0,0,0,a,b,c)&=d_{(A,B,0)}^1(a,b,c)\\
		&=(d_{(A,B,0)}^{0,*}+d_{(A,B,0)}^1)(a,b,c),\end{aligned}$$
	and it's easy to see that the differential $(D\mathcal{RVW})_{(A,B,0)}|_{\{0\}\times T\mathcal{C}_k^\diamond(P))}$ is Fredholm, where ${\{0\}\times T\mathcal{C}_k^\diamond(P)}=T((\tau^1,\tau^2,\tau^3,\theta_1,\theta_2)\times\mathcal{B}_k^\diamond(P))$. Thus $\mathcal{RVW}$ is a Fredholm section when restricted to the fixed-parameter fibers $(\tau^1,\tau^2,\tau^3,\theta_1,\theta_2)\times\mathcal{C}_k^\diamond(P)\subset(\mathcal{T}^r-\mathcal{T}^r_1)\times\mathcal{C}_k^\diamond(P)$ where $(\tau^1,\tau^2,\tau^3,\theta_1,\theta_2)\in\mathcal{T}^r-\mathcal{T}^r_1$, so the Sard-Smale theorem (cf. \cite[Proposition 4.12]{Fe}, \cite[Proposition 4.3.11]{D2}) implies that there is a first-category subset $\mathcal{T}^r_2\subset\mathcal{T}^r-\mathcal{T}^r_1$ such that the zero sets in $\mathcal{C}_k^\diamond(P)$ of $\mathcal{RVW}(\tau^1,\tau^2,\tau^3,\theta_1,\theta_2,\cdot)$ are regular (note the transversality) for all perturbations $(\tau^1,\tau^2,\tau^3,\theta_1,\theta_2)\in\mathcal{T}^r-(\mathcal{T}^r_{1}\cup\mathcal{T}^r_{2})$. 
	
	In summary, we have the following theorem, which is also Theorem \ref{main}.
	
	\begin{theorem}Let $(X,g)$ be a closed, oriented and smooth Riemannian 4-manifold, Then there is a first-category subset $\mathcal{T}^r_{fc}\subset\mathcal{T}^r$ such that for all $(\tau^1,\tau^2,\tau^3,\theta_1,\theta_2)$ in $\mathcal{T}^r-\mathcal{T}^r_{fc}$ the following holds: The zero set of the section $\mathcal{RVW}(\tau^1,\tau^2,\tau^3,\theta_1,\theta_2,\cdot)$ in $\mathcal{C}_k^\diamond(P)$ is regular and the moduli space $\mathcal{M}_{RVW}^*(\tau^1,\tau^2,\tau^3,\theta_1,\theta_2)=\big(\mathcal{RVW}(\tau^1,\tau^2,\tau^3,\theta_1,\theta_2,\cdot)^{-1}(0)/\mathcal{G}_{k+1}(P)\big)\cap\mathcal{B}_k^\diamond(P)$ is a smooth manifold of dimension 0.
	\end{theorem}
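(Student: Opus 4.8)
The plan is to deduce the final theorem as a formal consequence of everything assembled in Section 4, essentially by the standard Freed--Uhlenbeck / Feehan parametrized transversality machinery. The ingredients are already in place: the elliptic deformation complex with $\ind(\mathcal D_A)=0$ (Section 2.3), the global regularity Theorems~\ref{tj} and~\ref{tk}, Lemmas~\ref{xzz} and~\ref{xz}, and the closed-range analysis of $(D\mathcal{RVW})_\Theta$ in Section 4.1. So the proof I would write is short: it records how these pieces combine, rather than proving anything substantially new.

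First I would recall that, by the slice theorem, $\mathcal{PM}^\diamond_k(P)$ is cut out inside the Hilbert manifold $(\mathcal T^r-\mathcal T^r_1)\times\mathcal B^\diamond_k(P)$ as the zero set of the section $\mathcal{RVW}$ of $\overline{\mathcal E}_k$, and that by Theorem~\ref{tk} every point of this zero set has a smooth representative, so working with $L^2_k$ configurations loses nothing. The key transversality claim is that for $(\tau^1,\dots,\theta_2)\in\mathcal T^r-\mathcal T^r_1$ the full differential $(D\mathcal{RVW})_\Theta$ is surjective onto $\mathcal C'_{k-1}(P)$ at every zero $[A,B,0]\in\mathcal B^\diamond_k(P)$. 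This was reduced in Section 4.1 to showing that any $(\phi,\psi)\in\operatorname{Ker}(D\mathcal{RVW})^*_\Theta$ vanishes; Lemma~\ref{xzz} furnishes an open set $V\subset X$ with $[B\centerdot B]\neq0$, the variations $\delta\tau^2,\delta\tau^3$ force $\psi\equiv0$ on $V$ (via the rank bound on $B$ and \cite[Lemma 2.3]{Fe}), the variation $\delta\tau^1$ together with Lemma~\ref{xz} then forces $\phi\equiv0$ on $V$, and Aronszajn unique continuation for $(D\mathcal{RVW})_\Theta(D\mathcal{RVW})^*_\Theta$ propagates this to all of $X$. This establishes that $\mathcal{RVW}$ is transverse to the zero section over $(\mathcal T^r-\mathcal T^r_1)\times\mathcal B^\diamond_k(P)$, so $\mathcal{PM}^\diamond_k(P)$ is a smooth Banach manifold.

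Next I would invoke the Sard--Smale theorem. Restricted to a fixed parameter slice $(\tau^1,\dots,\theta_2)\times\mathcal C^\diamond_k(P)$, the section $\mathcal{RVW}$ is Fredholm of index $\ind(\mathcal D_A)=0$ (the zeroth-order perturbation and the parameter terms do not change the index, by the Rellich compactness argument of Section 2.3). Applying Sard--Smale to the projection $\mathcal{PM}^\diamond_k(P)\to\mathcal T^r-\mathcal T^r_1$ yields a first-category set $\mathcal T^r_2\subset\mathcal T^r-\mathcal T^r_1$ such that for $(\tau^1,\dots,\theta_2)\notin\mathcal T^r_1\cup\mathcal T^r_2$ the fiber $\mathcal{RVW}(\tau^1,\dots,\theta_2,\cdot)^{-1}(0)\cap\mathcal C^\diamond_k(P)$ is regular; setting $\mathcal T^r_{fc}:=\mathcal T^r_1\cup\mathcal T^r_2$ (a first-category subset of $\mathcal T^r$), the implicit function theorem then makes $\mathcal M^*_{RVW}(\tau^1,\dots,\theta_2)$ a smooth manifold of dimension $\ind(\mathcal D_A)=0$, and Theorem~\ref{tk} guarantees the solutions are genuinely smooth. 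This is exactly the statement, so the proof ends.

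\textbf{Main obstacle.} Honestly, at this stage there is no deep obstacle left — the substantive work was Lemmas~\ref{xzz} and~\ref{xz} and the closed-range argument, all already done. The one point requiring care in the write-up is the bookkeeping around the index and Fredholmness: one must check that the perturbation terms $\tau^3 B$, $\tau^1(b\centerdot\theta_1+2[b\centerdot B]\centerdot\theta_2)$, etc., are genuinely lower-order (multiplication operators, compact into $L^2_{k-1}$ for $k\geq3$), so that $(D\mathcal{RVW})_{(A,B,0)}|_{\{0\}\times T\mathcal C^\diamond_k(P)}$ remains Fredholm of index $0$ and the Sard--Smale hypotheses genuinely hold; and that $\mathcal T^r_1\cup\mathcal T^r_2$ is still first category in $\mathcal T^r$ (not merely in $\mathcal T^r-\mathcal T^r_1$), which is immediate since a countable union of nowhere-dense sets is first category.
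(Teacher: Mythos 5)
Your proposal is correct and follows essentially the same route as the paper: the surjectivity of $(D\mathcal{RVW})_\Theta$ is obtained by killing $(\phi,\psi)$ on an open set $V$ where $[B\centerdot B]\neq 0$ (supplied by Lemma~\ref{xzz} and irreducibility) via the parameter variations $\delta\tau^2,\delta\tau^3,\delta\tau^1$ together with Lemma~\ref{xz} and \cite[Lemma 2.3]{Fe}, then propagated by Aronszajn unique continuation, after which Fredholmness of index $\ind(\mathcal{D}_A)=0$ on fixed-parameter slices and the Sard--Smale theorem produce $\mathcal{T}^r_2$ and one sets $\mathcal{T}^r_{fc}=\mathcal{T}^r_1\cup\mathcal{T}^r_2$. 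No substantive difference from the paper's argument.
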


	\ni\textbf{Acknowledgements.} 
	This research is partially supported by NSFC grants 12201255. The author is grateful to Bo Dai, Huijun Fan and Yuuji Tanaka for invaluable help in the production of this paper.
	
	\section*{Appendix}
	\appendix
	\setcounter{theorem}{0}
	\renewcommand\thetheorem{A.\arabic{theorem}}
	
	In this appendix, we explain some notation, and prove a few technical lemmas.
	
	Let $V$ be a finite dimensional inner product space. Let $\{ e_1,\cdots, e_n\}$ be an orthonormal basis for $V$, and  $\{e^1,\cdots,e^n\}$  the dual basis for $V^*$. For any $\alpha\in\Lambda^{p}V^*$, $\beta\in\Lambda^{q}V^*$, we define
	$$\alpha\centerdot\beta=(-1)^{p-1}\sum_{i=1}^n(\iota_{e_i}\alpha)\wedge(\iota_{e_i}\beta)\in\Lambda^{p+q-2}V^*,$$
	where $\iota_{e_i}$ is the contraction with $e_i$.
	
	The exterior algebra $\L^\bullet V^*$ inherits an inner product, such that $\{ e^{i_1}\wedge e^{i_2}\wedge\cdots\wedge e^{i_{p}}; 0\leq p\leq n, i_1 <i_2 <\cdots <i_p\}$ forms an orthonormal basis.  The inner product of two elements $\a,\b \in \L^\bullet V^*$ is denoted as $\a\cdot\b$,

	Replacing $V$ by the tangent space $T_xX$ at any point $x\in X$ of a Riemannian manifold $X$, we can define the products $\centerdot$ and $\cdot$ for differential forms. Let $\mathfrak{g}$ be the  Lie algebra of a compact Lie group $G$, equipped with an invariant inner product $\langle\cdot,\cdot\rangle$, here invariance means $\langle [\xi,\eta],\zeta\rangle=\langle \xi,[\eta,\zeta]\rangle$, $\forall \xi,\eta,\zeta \in\mathfrak{g}$. If $\mathfrak{g}$ is $\mathfrak{su}(2)$, an invariant inner product is given by $\langle \xi,\eta\rangle=-\frac{1}{2}\mathrm{tr}(\xi\eta)$ \cite[(A.18)]{Ma}, where $\xi,\eta\in\mathfrak{su}(2)$ are regarded as matrices.
	
	Let $P\to X$ be a principal $G$-bundle and $\fg_P$ be the adjoint bundle. For $\fg_P$-valued forms, for example, if $\omega_1=\xi_1\otimes\alpha_1$ and $\omega_2=\xi_2\otimes\a_2$, where $\xi_1,\xi_2\in\W^0(X, \fg_P)$ and $\alpha_1\in\Omega^p(X)$, $\a_2\in\Omega^q(X)$, we define $[\omega_1\centerdot\omega_2] =[\xi_1,\xi_2]\otimes\alpha_1\centerdot\a_2$ and $[\omega_1\cdot\omega_2]=[\xi_1,\xi_2]\otimes\alpha_1\cdot\a_2$, $\langle\omega_1\cdot \omega_2\rangle=\langle \xi_1, \xi_2\rangle \a_1\cdot \a_2$. See  \cite[Appendix]{Ma} for more detail.
	
	Over a Riemannian 4-manifold $X$, the \emph{rank} of a section $B\in\Omega^{2,+}(X, \fg_P)$ is defined as follows. Choose local frames for $\fg_P$ and  $\Lambda^{2,+}(T^*X)$, then the section $B$ is represented by a $d\times 3$ matrix-valued function with respect to the local frames, where $d=\dim G$. The rank of $B$ at a point of $X$ is just the rank of the matrix at that point, and $\rank(B)$ is the maximum of the pointwise rank over $X$. If $\mathfrak{g}$ is $\mathfrak{su}(2)$, then the maximum of the rank of $B$ is $d=3$.
	
	\begin{lemma}\label{xx}Let $\alpha$, $\beta$ be two elemmaents of $\mathfrak{su}(2)$ and they are linearly independent, then $\alpha,\beta,[\alpha,\beta]$ form a basis of the linear space $\mathfrak{su}(2)$.
	\end{lemma}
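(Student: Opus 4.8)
The plan is to prove that $\alpha$, $\beta$, $[\alpha,\beta]$ are linearly independent; since $\dim_{\mathbb{R}}\mathfrak{su}(2)=3$, this is the same as saying they form a basis. The whole thing reduces to the standard identification of $(\mathfrak{su}(2),[\cdot,\cdot])$ with $(\mathbb{R}^3,\times)$ up to a nonzero scalar.

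First I would fix an orthonormal basis $\{\eta_1,\eta_2,\eta_3\}$ of $\mathfrak{su}(2)$ for the invariant product $\langle\xi,\eta\rangle=-\frac12\mathrm{tr}(\xi\eta)$, normalized by $[\eta_1,\eta_2]=2\eta_3$ and cyclic permutations --- this is the same frame used in the proof of Lemma~\ref{xz}, and it exists (build it from the Pauli matrices, say). Writing $\alpha=\sum_i a_i\eta_i$ and $\beta=\sum_j b_j\eta_j$ with $a=(a_1,a_2,a_3)$, $b=(b_1,b_2,b_3)\in\mathbb{R}^3$, the structure constants give at once $[\alpha,\beta]=2\sum_k(a\times b)_k\eta_k$. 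Now $\alpha,\beta$ are linearly independent iff $a,b$ are, iff $a\times b\neq0$; and when $a\times b\neq0$ it is orthogonal to both $a$ and $b$, hence not in their span, so $a,b,a\times b$ --- equivalently $\alpha,\beta,[\alpha,\beta]$ --- are linearly independent, and therefore a basis.

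There is essentially no obstacle here; the only point requiring a moment's care is the normalization constant in $[\alpha,\beta]=2(a\times b)$, which is fixed by the conventions already in the appendix and in any case does not affect the conclusion. For completeness I would also record the basis-free version of the same idea: invariance of $\langle\cdot,\cdot\rangle$ gives $\langle[\alpha,\beta],\alpha\rangle=\langle\alpha,[\beta,\alpha]\rangle=-\langle[\alpha,\beta],\alpha\rangle$, so $[\alpha,\beta]\perp\alpha$, and likewise $[\alpha,\beta]\perp\beta$; were $[\alpha,\beta]$ in $\mathrm{span}\{\alpha,\beta\}$ it would be orthogonal to itself, hence zero, but the centralizer in $\mathfrak{su}(2)$ of the nonzero element $\alpha$ is only $\mathbb{R}\alpha$, so $[\alpha,\beta]=0$ would contradict the linear independence of $\alpha$ and $\beta$.
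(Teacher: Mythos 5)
Your proof is correct. It is close in spirit to the paper's argument --- both expand $\alpha,\beta$ in a basis $\{\eta_1,\eta_2,\eta_3\}$ with $[\eta_1,\eta_2]=2\eta_3$ and cyclic permutations, so that the bracket becomes twice the cross product --- but the way you finish is different. The paper sets up the linear system $k_1\alpha+k_2\beta+k_3[\alpha,\beta]=0$ and computes the $3\times3$ determinant explicitly, finding it equal to $2\bigl((\alpha_2\beta_3-\alpha_3\beta_2)^2+(\alpha_3\beta_1-\alpha_1\beta_3)^2+(\alpha_1\beta_2-\alpha_2\beta_1)^2\bigr)=2\,|a\times b|^2>0$; you instead invoke the geometric facts that $a\times b\neq0$ when $a,b$ are independent and that $a\times b$ is orthogonal to $\mathrm{span}\{a,b\}$, which gives independence of the triple without any determinant. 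Your basis-free variant is the genuinely distinct contribution: invariance of $\langle\cdot,\cdot\rangle$ yields $[\alpha,\beta]\perp\alpha$ and $[\alpha,\beta]\perp\beta$ in any compact Lie algebra with an invariant inner product, so the only thing specific to $\mathfrak{su}(2)$ is the nonvanishing of $[\alpha,\beta]$, which you correctly reduce to the fact that the centralizer of a nonzero element is its own span. This is more conceptual and isolates exactly where the rank-one structure of $\mathfrak{su}(2)$ enters, whereas the paper's computation is self-contained and elementary but tied to coordinates; either argument suffices for the use made of the lemma in the transversality proof.
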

	
	\begin{proof} Let $\alpha=\alpha_1\eta_1+\alpha_2\eta_2+\alpha_3\eta_3$ and $\beta=\beta_1\eta_1+\beta_2\eta_2+\beta_3\eta_3$ where $\alpha_i,\beta_i\in\mathbb{R}$, $i=1,2,3$, $\{\eta_1,\eta_2,\eta_3\}$ is a  basis for $\mathfrak{su}(2)$ such that $[\eta_1,\eta_2]=2\eta_3$ and cyclic permutations. Then we have
		$$[\alpha,\beta]=2(\alpha_2\beta_3-\alpha_3\beta_2)\eta_1+2(\alpha_3\beta_1-\alpha_1\beta_3)\eta_2+2(\alpha_1\beta_2-\alpha_2\beta_1)\eta_3.$$
		We need to show that $\alpha,\beta,[\alpha,\beta]$ are linearly independent. Let $k_1,k_2,k_3\in\mathbb{R}$ be three real numbers and $$k_1\alpha+k_2\beta+k_3[\alpha,\beta]=0,$$
		then note that $\{\eta_1,\eta_2,\eta_3\}$ are linearly independent, we must have
		$$\left\{
		\begin{aligned}
			k_1\alpha_1+k_2\beta_1+2k_3(\alpha_2\beta_3-\alpha_3\beta_2)=0\\
			k_1\alpha_2+k_2\beta_2+2k_3(\alpha_3\beta_1-\alpha_1\beta_3)=0\\
			k_1\alpha_3+k_2\beta_3+2k_3(\alpha_1\beta_2-\alpha_2\beta_1)=0.\\
		\end{aligned}\right.$$
		The determinant of the above linear system of equations is
		$$\begin{vmatrix}
			
			\alpha_1 & \beta_1 & 2(\alpha_2\beta_3-\alpha_3\beta_2)\\
			
			\alpha_2 & \beta_2 & 2(\alpha_3\beta_1-\alpha_1\beta_3)\\
			
			\alpha_3 & \beta_3 & 2(\alpha_1\beta_2-\alpha_2\beta_1)\\
			
		\end{vmatrix}=2(\alpha_2\beta_3-\alpha_3\beta_2)^2+2(\alpha_3\beta_1-\alpha_1\beta_3)^2+2(\alpha_1\beta_2-\alpha_2\beta_1)^2.$$
		Note that $\alpha$, $\beta$ are linearly independent, so $\alpha\neq 0$, $\beta\neq 0$ and there's not a nonzero real number $k\in\mathbb{R}$ such that $\alpha=k\beta$, which means $\alpha_2\beta_3-\alpha_3\beta_2,\alpha_3\beta_1-\alpha_1\beta_3,\alpha_1\beta_2-\alpha_2\beta_1$ cannot be all zero, so the determinant is greater than zero, which implies $k_1=k_2=k_3=0$, so $\alpha,\beta,[\alpha,\beta]$ are linearly independent and they form a base of the three-dimensional linear space $\mathfrak{su}(2)$.
	\end{proof}
	
	\begin{lemma}\label{xxy}Let $0\neq B\in\mathfrak{su}(2)\otimes\Lambda^{2,+}\mathbb{R}^4$, $\theta\in\Lambda^{1}\mathbb{R}^4$ and $B\centerdot\theta=0$, then $\theta=0$.
	\end{lemma}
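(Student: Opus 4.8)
The plan is to reduce the $\mathfrak{su}(2)$-valued statement to the corresponding statement for a single nonzero self-dual $2$-form, and then to use the elementary fact that a nonzero self-dual $2$-form on $\mathbb{R}^4$ is a symplectic form. First I would fix a basis $\{\eta_1,\eta_2,\eta_3\}$ of $\mathfrak{su}(2)$ and write $B=\sum_{i=1}^{3}\eta_i\otimes\sigma_i$ with $\sigma_i\in\Lambda^{2,+}\mathbb{R}^4$. Since the product $\centerdot$ acts only on the form factors, $B\centerdot\theta=\sum_{i=1}^{3}\eta_i\otimes(\sigma_i\centerdot\theta)$, and because the $\eta_i$ are linearly independent the hypothesis $B\centerdot\theta=0$ forces $\sigma_i\centerdot\theta=0$ for every $i$. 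As $B\neq 0$, at least one $\sigma_{i_0}$ is nonzero, so it is enough to prove the reduced claim: if $0\neq\sigma\in\Lambda^{2,+}\mathbb{R}^4$ and $\sigma\centerdot\theta=0$, then $\theta=0$.

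For this reduced claim the key identity is that, straight from the definition $\alpha\centerdot\beta=(-1)^{p-1}\sum_i(\iota_{e_i}\alpha)\wedge(\iota_{e_i}\beta)$ applied with $p=2$ and $\beta=\theta$ a $1$-form (so that $\iota_{e_i}\theta$ is the scalar $\theta(e_i)$), one gets $\sigma\centerdot\theta=-\iota_{v}\sigma$, where $v\in\mathbb{R}^4$ is the vector metrically dual to $\theta$. Now I would invoke self-duality: since $*\sigma=\sigma$ we have $\sigma\wedge\sigma=\sigma\wedge*\sigma=|\sigma|^2\,\mathrm{vol}_g$, which is nonzero because $\sigma\neq0$; hence $\sigma$, viewed as an alternating bilinear form on the $4$-dimensional space $\mathbb{R}^4$, is nondegenerate, i.e. the map $v\mapsto\iota_v\sigma$ is injective. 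Therefore $\sigma\centerdot\theta=-\iota_v\sigma=0$ gives $v=0$, hence $\theta=0$, and combined with the first paragraph this proves the lemma. A computational alternative, closer to the style of the rest of the paper, is to check on the standard basis $\{e^1\wedge e^2+e^3\wedge e^4,\ e^1\wedge e^3-e^2\wedge e^4,\ e^1\wedge e^4+e^2\wedge e^3\}$ of $\Lambda^{2,+}\mathbb{R}^4$ that $|B\centerdot\theta|=c\,|B|\,|\theta|$ pointwise for a universal constant $c>0$; this makes the vanishing immediate but requires some sign bookkeeping.

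I do not expect a genuine obstacle here; the statement is short and essentially linear-algebraic. The only points that need care are the sign in passing from $\alpha\centerdot\beta=(-1)^{p-1}\sum_i(\iota_{e_i}\alpha)\wedge(\iota_{e_i}\beta)$ to $\sigma\centerdot\theta=-\iota_v\sigma$, and the standard equivalence, on a $4$-dimensional vector space, between nondegeneracy of a $2$-form $\omega$ and $\omega\wedge\omega\neq0$. With these in hand the lemma follows, and it is exactly what is used in the proof of Lemma~\ref{xz}, there applied with $B$ taken to be $\eta_i\otimes\sigma_i$ for a frame element $\sigma_i$, so that $\eta_i\otimes\sigma_i\centerdot\omega=0$ forces $\omega=0$.
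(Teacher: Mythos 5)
Your proof is correct, but it follows a genuinely different route from the paper's. The paper invokes the singular value decomposition of \cite[\S 4.1.1]{Ma} to put $B$ into the diagonal normal form $B=\sum_i B_i\,\eta_i\otimes\sigma_i$ with respect to adapted orthonormal frames, then computes $B\centerdot\theta$ component by component and reads off $B_i\theta_j=0$ for all $i,j$ from the linear independence of $\{\eta_i\otimes e^j\}$. You instead avoid the normal form entirely: you split $B=\sum_i\eta_i\otimes\sigma_i$ in an arbitrary basis of $\mathfrak{su}(2)$, observe that $B\centerdot\theta=\sum_i\eta_i\otimes(\sigma_i\centerdot\theta)$ so each $\sigma_i\centerdot\theta$ must vanish, and then settle the scalar case via the identity $\sigma\centerdot\theta=-\iota_v\sigma$ ($v$ the metric dual of $\theta$) together with the fact that a nonzero self-dual $2$-form is nondegenerate because $\sigma\wedge\sigma=\sigma\wedge*\sigma=|\sigma|^2\,\mathrm{vol}\neq0$. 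All of these steps check out (the sign in $\sigma\centerdot\theta=-\iota_v\sigma$ is immaterial for the vanishing argument), so your argument is complete; it is more conceptual and coordinate-free, applies verbatim to anti-self-dual forms as well, and makes transparent why only the nonvanishing of some $\sigma_{i_0}$ matters, whereas the paper's computation has the virtue of staying within the SVD framework it already uses in Lemma~\ref{xz} and in \cite{Ma}. Your side remark that $|B\centerdot\theta|=c\,|B|\,|\theta|$ is in fact true (with $c=1/\sqrt{2}$, since every self-dual form is $SO(4)$-equivalent to a multiple of $e^1\wedge e^2+e^3\wedge e^4$), but it is not needed and you rightly do not rely on it.
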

	\begin{proof}
		According to the singular value decomposition for $\mathfrak{su}(2)\otimes\Lambda^{2,+}\mathbb{R}^4$ in  \cite[\S 4.1.1]{Ma}, there exist oriented orthonormal basis $\{e^1,e^2,e^3,e^4\}$ for $\mathbb{R}^4$ and a basis $\{\eta_1,\eta_2,\eta_3\}$ for $\mathfrak{su}(2)$ such that $B=B_{1}\eta_1\otimes(e^1\wedge e^2+e^3\wedge e^4)+B_{2}\eta_2\otimes(e^1\wedge e^3+e^4\wedge e^2)+B_{3}\eta_3\otimes(e^1\wedge e^4+e^2\wedge e^3)$, 
		where $B_{1},B_{2},B_{3}\in\R$. And let $\theta=\theta_1e^1+\theta_2e^2+\theta_3e^3+\theta_4e^4$ where $\theta_k\in\R$, $k=1,2,3,4$. Then
		\begin{align*}
			&B\centerdot\theta\\
			=&-B_1\theta_1\eta_1\otimes e^2-B_2\theta_1\eta_2\otimes e^3-B_3\theta_1\eta_3\otimes e^4+B_1\theta_2\eta_1\otimes e^1\\
			&+B_2\theta_2\eta_2\otimes e^4-B_3\theta_2\eta_3\otimes e^3
			-B_1\theta_3\eta_1\otimes e^4+B_2\theta_3\eta_2\otimes e^1\\
			&+B_3\theta_3\eta_3\otimes e^2+B_1\theta_4\eta_1\otimes e^3-B_2\theta_4\eta_2\otimes e^2+B_3\theta_4\eta_3\otimes e^1.
		\end{align*}
		The linear independence of $\{\eta_i\otimes e^j\}_{1\leq i\leq 3,1\leq j\leq 4}$ implies $B_i\theta_j=0$, $\forall 1\leq i\leq 3,1\leq j\leq 4$, and note that $B\neq 0$, at least one of $B_1,B_2,B_3$ is nonzero, hence $\theta_j=0$ for $j=1,2,3,4$ and $\theta=0$.
	\end{proof}

\end{document}